\newlength{\ten}
\newcommand{\de}{\partial}
\newcommand{\vp}{\varphi}
\newcommand{\db}{\overline{\partial}}
\newcommand{\im}{\sqrt{-1}}
\newcommand{\ddbar}{\im\de\db}
\newcommand{\ov}[1]{\overline{#1}}
\newcommand{\tr}[2]{\mathrm{tr}_{#1}{#2}}
\newcommand{\Ric}{\mathrm{Ric}}
\newcommand{\Laplace}{\Delta}
\newcommand{\newsort}[1]{}
\numberwithin{equation}{section}
\newtheorem{thm}{Theorem}[section]
\newtheorem{prop}[thm]{Proposition}
\newtheorem{lem}[thm]{Lemma}
\newtheorem{cor}[thm]{Corollary}
\theoremstyle{definition}
\theoremstyle{definition}
\newtheorem{remark}[thm]{Remark}
\begin{document}

\title{The Chern-Ricci flow on primary Hopf surfaces}
\author{Gregory Edwards}

\begin{abstract}
The Hopf surfaces provide a family of minimal non-K\"ahler surfaces of class VII on which little is known about the Chern-Ricci flow. We use a construction of Gauduchon-Ornea for locally conformally K\"ahler metrics on primary Hopf surfaces of class 1 to study solutions of the Chern-Ricci flow. These solutions reach a volume collapsing singularity in finite time, and we show that the metric tensor satisfies a uniform upper bound, supporting the conjecture that the Gromov-Hausdorff limit is isometric to a round $S^1$. Uniform $C^{1+\beta}$ estimates are also established for the potential. Previous results had only been known for the simplest examples of Hopf surfaces.
\end{abstract}

\maketitle

\section{Introduction}

The Chern-Ricci flow is a parabolic flow of Hermitian metrics first studied by Gill \cite{Gi11} and later introduced in greater generality by Tosatti-Weinkove \cite{ToWe15}. We say $g(t)$ is a solution to the Chern-Ricci flow starting from a Hermitian metric $g_0$ if
\begin{equation} \label{eq:20}
	\begin{cases}
		\frac \de {\de t} g = - \Ric^{\mathcal C h}(g) \\
		g(0) = g_0
	\end{cases}
\end{equation}
 where $\Ric^{\mathcal Ch}$ is the \textit{Chern-Ricci} tensor of $g$ defined by
\[
	\Ric^{\mathcal Ch}_{i \ov j} = - \de_i \de_{\ov j} \log \det g.
\]

If the associated $(1,1)$-form, $\omega_0 = \im (g_0)_{i \ov j} dz^i \wedge d \ov z^j$ is closed, then $g_0$ is a K\"ahler metric and the Chern-Ricci tensor is equal to the usual Ricci tensor. Thus the Chern-Ricci flow yields the same solution as the well known K\"ahler-Ricci flow \cite{Cao,ChWa12,FIK,PSSW09,PS06,SoTi07,SoTi12,SoTi09,SoWe11b,SoWe11,SoWe13,SoYu12,Sz10,Ti08,TiZh,To10,ToWeYa14,Ts}. Other flows of Hermitian metrics have also been proposed and studied \cite{StTi10,StTi11,StTi13,LiYa12,Ust16,Ust17}.

One direction of interest introduced in \cite{ToWe13} is to classify the behavior of the Chern-Ricci flow of Gauduchon metrics on complex surfaces. On complex surfaces, a \textit{Gauduchon} metric is a Hermitian metric whose associated $(1,1)$-form satisfies $\de \db \omega_0 = 0$. A well known result of Gauduchon states that every Hermitian metric lies in the conformal class of a Gauduchon metric \cite{Ga77}. Furthermore any Hermitian metric in the $\de\db$-class,
\[
	\mathcal H_{\omega_0} = \{ \omega_\psi | \omega_\psi = \omega_0 + \ddbar \psi > 0 \textit{ for } \psi \in C^{\infty}(M) \},
\]
is also Gauduchon. On surfaces, the Gauduchon condition is preserved by the Chern-Ricci flow \cite{ToWe15} and the Chern-Ricci flow of Gauduchon metrics on complex surfaces has been studied in several contexts \cite{GiSm13,ToWe13,ToWe15,ToWeYa15}.

For surfaces which are not minimal (i.e. those which have exceptional divisors) and with Kodaira dimension not equal to $-\infty$, the flow reaches a finite time non-collapsing singularity at which time it contracts finitely many disjoint exceptional curves in the Gromov-Hausdorff topology, up to a condition on the $\de \db$-class of the limiting form \cite{ToWe13,ToWe15}, generalizing results for the K\"ahler-Ricci flow \cite{SoWe11b,SoWe11,SoWe11a}. 

By the Enriques-Kodaira classification of complex surfaces \cite{BHPV04}, all minimal non-K\"ahler surfaces can be classified into the following families:
\begin{center}
\begin{minipage}{.6\textwidth}
\begin{enumerate}[(i)]
	\item Kodaira surfaces,
	\item Minimal non-K\"{a}hler properly elliptic surfaces,
	\item Inoue surfaces,
	\item Hopf surfaces
	\item Minimal surfaces of class VII with $b_2(M)>0$,
\end{enumerate}
\end{minipage}
\end{center}
where Kodaira surfaces are minimal surfaces with $b_1(M)$ odd and Kodaira dimension 0; Inoue surfaces are those with universal cover $\mathbb C \times H$ where $H$ is the upper half plane; Hopf surfaces are those with universal cover $\mathbb C^2 \setminus \{0\}$; and surfaces of class VII are surfaces with $b_1(M)=1$ and Kodaira dimension $-\infty$. By \cite{Bo82,Ko66,LiYaZh94,Te94}, a class VII surface with $b_2(M)=0$ must be either a Hopf or Inoue surface.

Solutions of the Chern-Ricci flow have been studied in several of the cases above: On manifolds with vanishing first Bott-Chern class -- in any dimension -- the flow converges smoothly to a Chern-Ricci flat Hermitian metric \cite{Gi11} using the uniform $C^0$-estimate of \cite{ToWe10}; on minimal non-K\"ahler elliptic surfaces the normalized Chern-Ricci flow converges in the Gromov-Hausdorff topology to an orbifold K\"ahler-Einstein metric on a Riemann surface \cite{ToWeYa15}; and on Inoue surfaces, after a conformal change to the initial metric, the Chern-Ricci flow converges in the Gromov-Hausdorff topology to a round $S^1$ up to scaling \cite{FaToWeZh16}. The surfaces of type (v) are not yet classified except for the case $b_2(M)=1$ \cite{Te05} and one long-term goal of study for the Chern-Ricci flow is to provide new topological or geometric information about Class VII surfaces in general.

On Hopf surfaces, the flow always reaches a finite time singularity at which time the volume goes to zero \cite{ToWe15}. Beyond this, little is currently known about the Chern-Ricci flow on Hopf surfaces in any generality. The round metric on $S^3 \times S^1$ admits a compatible complex structure as a Hopf surface, and the Chern-Ricci flow of this metric has an explicit maximal solution \cite{ToWe15}. The solution becomes extinct at time $T = \frac 1 2$, and $(M,g(t))$ converges in the Gromov-Hausdorff topology to a round $S^1$ up to a scaling factor \cite{ToWe13}. Moreover, if the initial metric is in the same $\de \db$-class as the round metric, then the solution satisfies an upper bound and the potential converges in $C^{1+\beta}$ for every $\beta \in (0,1)$ \cite{ToWe15}.

The primary Hopf surfaces of class 1, as defined in \cite{HaLa83}, form a large class of Hopf surfaces. These are defined as the quotients $M = M_{\alpha, \beta} = (\mathbb C^2 \setminus \{ 0 \})/\sim$ by the action $(z_1,z_2) \mapsto (\alpha z_1, \beta z_2)$ for $\alpha, \beta \in \mathbb C$, with $1 < |\alpha| \leq |\beta|$. All primary Hopf surfaces\footnote{The primary Hopf surfaces consist of both those of class 1, and those of class 0 which are defined as quotients of $\mathbb C^2 \setminus \{0\}$ of the form $(z_1,z_2) \mapsto (\beta^m z_1 + \lambda z_2^m, \beta z_2)$ for some positive integer $m$ and $\beta,\lambda \in \mathbb C$ with $1 < |\beta|$ and $\lambda \neq 0$.} are diffeomorphic to $S^3 \times S^1$, and all Hopf surfaces are finitely covered by a primary Hopf surface \cite{Ko66,Ko66a}. In particular, since the second Betti number vanishes, it is clear these surfaces do not admit any K\"ahler metric.

While $M$ is never K\"ahler, one can construct Hermitian metrics on $M$ which are \textit{locally conformally} K\"ahler. The existence of such metrics was first proved by LeBrun (see \cite{GaOr98}), for $|\alpha|,|\beta|$ distinct but sufficiently close, and explicit examples were constructed by Gauduchon-Ornea \cite{GaOr98}. These particular metrics are of interest because they provide examples of Hermitian metrics on these surfaces. It is a difficult problem in general to give explicit Hermitian metrics on Hopf surfaces, particularly ones for which $\alpha \neq \beta$.

These LCK metrics are constructed as follows: We define a function, $\Phi = \Phi(|z_1|,|z_2|)$, on $\mathbb C^2 \setminus \{0\}$ which satisfies the relation
\begin{equation}\label{eq:1}
	|z_1|^2 \Phi^{-2k_1} + |z_2|^2 \Phi^{-2k_2} = 1,
\end{equation}
where
\[
	k_1 = \frac {\log |\alpha|}{\log |\alpha| + \log |\beta|} \leq k_2 = \frac {\log |\beta|}{\log |\alpha| + \log |\beta|}.
\]
Indeed for any real constants $a,b$, not both zero,
\[
	s \mapsto a^2 s^{-2k_1} + b^2 s^{-2k_2}
\]
is continuous and strictly decreasing from $\infty$ to $0$ for $s>0$, and hence there is a unique value for which $a^2 s^{-2k_1} + b^2 s^{-2k_2} = 1$.

While $\Phi$ is a well defined function on $\mathbb C^2 \setminus \{ 0 \}$, it does not define a function on $M$. However, the $(1,1)$-form
\begin{equation}\label{eq:5}
	\hat \omega = \frac {\ddbar \Phi}{\Phi}
\end{equation}
is well defined and positive definite (see Remark ~\ref{eq:9}), and hence it defines a Hermitian metric on $M$. These Hermitian metrics are never K\"ahler, but they are locally conformally K\"ahler (LCK), and satisfy
\begin{equation}\label{eq:4}
	d \hat \omega =  \hat \omega \wedge \theta
\end{equation}
for a closed, real 1-form given by
\begin{equation}\label{eq:23}
	\theta = \frac {d \Phi}{\Phi}.
\end{equation}
The existence of a closed 1-form satisfying ~\eqref{eq:4} is equivalent to the Hermitian metric being LCK \cite{Ga84,Lee43,Va80}.

As a special case of ~\eqref{eq:5}, when $|\alpha|=|\beta|$, we recover $\Phi  = r^2 = |z_1|^2 + |z_2|^2$, and $(M,\hat \omega)$ is isometric to the round metric on $S^3 \times S^1$. We call this \textit{the round metric on the standard Hopf surface}, and $\omega(t) = \hat \omega - t \Ric^{\mathcal Ch}(\hat \omega)$ provides an explicit maximal solution to the Chern-Ricci flow ~\eqref{eq:20} on $M \times [0,\frac 1 2)$ converging in the Gromov-Hausdorff topology to a round $S^1$ \cite{ToWe13,ToWe15}.

The LCK condition ~\eqref{eq:4} is not preserved under the flow, even for the round metric on the standard Hopf surface. However, we show that these initial metrics are Gauduchon (See Corollary (\ref{eq:10corb})), and therefore any metric in their $\de \db$-class is also Gauduchon. Since the Chern-Ricci flow preserves the Gauduchon condition it is of interest to study solutions starting from the $\de \db$-class of the LCK metrics on non-standard primary Hopf surfaces defined above.

Our main theorem is the following:
\begin{thm}\label{thm:main}
	Let $\hat \omega$ be the LCK metric constructed above, and $\omega_0 = \hat \omega + \ddbar \psi$ for some smooth plurisubharmonic function $\psi$ with $g_0$ its the associated Hermitian metric. Then a maximal solution to the Chern-Ricci flow ~\eqref{eq:20} exists on the time interval $[0,\frac 12)$ and there is a uniform constant $C>0$, independent of $t$, such that
	\[
		\omega(t) \leq C \hat \omega.
	\]
	on $M \times [0, \frac 12)$.
\end{thm}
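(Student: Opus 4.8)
\noindent The plan is to recast \eqref{eq:20} as a parabolic complex Monge--Amp\`ere equation and to prove the bound by the maximum principle, exploiting the rigid geometry of $\hat\omega$. Write $\omega_\Phi:=\ddbar\log\Phi$ and $\eta:=\im\,\de\log\Phi\wedge\db\log\Phi$, so that $\hat\omega=\omega_\Phi+\eta$ and $\eta\ge0$. In the logarithmic coordinates $u=\log|z_1|^2$, $v=\log|z_2|^2$ the relation \eqref{eq:1} forces the affine identity $k_1(\log\Phi)_u+k_2(\log\Phi)_v\equiv\tfrac12$; differentiating it shows that $(k_1,k_2)$ spans the kernel of the Hessian of $\log\Phi$, so $\omega_\Phi\ge0$ has rank $\le 1$ with kernel the weighted radial field $k_1z_1\de_{z_1}+k_2z_2\de_{z_2}$, while $\eta$ is of rank $\le 1$ with complementary kernel. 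Hence $\omega_\Phi\wedge\omega_\Phi=\eta\wedge\eta=0$, $\hat\omega^2=2\,\omega_\Phi\wedge\eta$, and a direct computation of $\det\hat g$ --- using $\det\mathrm{Hess}\log\Phi=0$ and that the functions $|z_1|^2\Phi^{-2k_1}$, $|z_2|^2\Phi^{-2k_2}$ are invariant, hence descend to $M$ --- gives $\Ric^{\mathcal Ch}(\hat\omega)=2\,\omega_\Phi+\ddbar\chi$ for a smooth function $\chi$ on $M$. Consequently $\check\omega_t:=(1-2t)\omega_\Phi+\eta$ is a $\ddbar$-closed, positive representative of the class of $\hat\omega-t\,\Ric^{\mathcal Ch}(\hat\omega)$ for $0\le t<\tfrac12$, the maximal time is $T=\tfrac12$, and $(1-2t)\hat\omega\le\check\omega_t\le\hat\omega$ with $\check\omega_t^2=(1-2t)\hat\omega^2$. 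Writing $\omega(t)=\check\omega_t+\ddbar\vp$, the flow becomes $\de_t\vp=\log\big((\check\omega_t+\ddbar\vp)^2/\hat\Omega\big)$ for a fixed volume form $\hat\Omega$ comparable to $\hat\omega^2$, with $\ddbar\vp|_{t=0}=\ddbar\psi$.

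The lower-order estimates are routine. At a spatial maximum of $\vp(\cdot,t)$ one has $\ddbar\vp\le0$, so $\omega(t)\le\check\omega_t$, hence $\omega(t)^2\le(1-2t)\hat\omega^2$ and $\de_t\vp\le\log(1-2t)+C$; symmetrically at a minimum; since $\int_0^{1/2}\log(1-2s)\,ds$ is finite, integrating gives a uniform bound on $\vp$ (up to the additive constant from $\psi$). Differentiating the equation, $(\de_t-\Laplace_\omega)\dot\vp=-2\,\tr{\omega}{\omega_\Phi}\le0$, so $\dot\vp\le\sup_M\dot\vp|_{t=0}=C$, i.e.\ $\omega(t)^2\le C\hat\omega^2$. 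A maximum-principle argument for the least eigenvalue of $\omega(t)$ relative to $\check\omega_t$ --- for which the initial bound is provided by $\omega_0$ being a Hermitian metric (with the optimal constant $1$ when $\psi$ is plurisubharmonic), and in which the torsion of the fixed metric $\hat\omega$ enters only through bounded coefficients --- yields the preserved lower bound $\omega(t)\ge c\,\check\omega_t\ge c(1-2t)\hat\omega$.

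These combine to give the theorem. Let $\mu_1\le\mu_2$ denote the eigenvalues of $\omega(t)$ with respect to $\hat\omega$; the lower bound gives $\mu_1\ge c(1-2t)$. The one remaining ingredient is the sharp volume estimate $\omega(t)^2\le C(1-2t)\hat\omega^2$, equivalently $\de_t\vp\le\log(1-2t)+C$, which refines the crude bound above and matches the linear rate $\int_M\omega(t)^2=(1-2t)\int_M\omega_0^2$ of volume collapse. Granting it, $\mu_1\mu_2=\omega(t)^2/\hat\omega^2\le C(1-2t)$, so $\mu_2\le C(1-2t)/\mu_1\le C$ and therefore $\tr{\hat\omega}{\omega}=\mu_1+\mu_2\le C$, which is the assertion $\omega(t)\le C\hat\omega$.

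The main obstacle is this sharp volume estimate near the extinction time. A direct maximum principle for $\dot\vp-\log(1-2t)$ would require $\tr{\omega}{\omega_\Phi}\ge(1-2t)^{-1}$, which is not forced by $\omega(t)\ge\check\omega_t$ alone; and the alternative of bounding $\log\tr{\hat\omega}{\omega}-A\vp$ fails because the favourable term produced by $-A\vp$ (through $\Laplace_\omega\vp=2-\tr{\omega}{\check\omega_t}$) degenerates at precisely the rate $(1-2t)$ at which the curvature term $C_0\,\tr{\omega}{\hat\omega}$ from the evolution of $\log\tr{\hat\omega}{\omega}$ blows up. One must instead use the precise alignment of $\omega_\Phi$, and hence of $\Ric^{\mathcal Ch}(\hat\omega)$, with the collapsing direction $\ker\eta$ of $\check\omega_t$: working in the $\hat\omega$-orthogonal splitting $T^{1,0}M=\ker\omega_\Phi\oplus\ker\eta$, one estimates the block of $\omega(t)$ along the surviving direction $\ker\omega_\Phi$ --- which must remain bounded --- separately from the collapsing block, and runs a combined maximum principle (or a single test function mixing $\dot\vp$ and $\log\tr{\hat\omega}{\omega}$) in which the two degenerations cancel. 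Carrying this out, together with the bookkeeping of the curvature and torsion of the non-K\"ahler metric $\hat\omega$, is the technical heart of the proof.
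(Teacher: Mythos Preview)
Your geometric setup is correct and in fact coincides with the paper's: your $\omega_\Phi=\ddbar\log\Phi$ and $\eta$ are exactly $\tfrac12\Ric^{\mathcal Ch}(\chi)$ and $\Theta$ in the paper's notation (where $\chi_{i\ov j}=\Phi^{-2k_i}\delta_{ij}$ is a metric, not your function $\chi$), and your reference $\check\omega_t=(1-2t)\omega_\Phi+\eta$ equals the paper's $\omega_t=(1-2t)\hat\omega+2t\Theta$. The rank-one structure, the identity $\check\omega_t^2=(1-2t)\hat\omega^2$, and the relation $\Ric^{\mathcal Ch}(\hat\omega)=2\omega_\Phi+\ddbar(3\log Z)$ are all used.

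But your proof has a genuine gap, which you yourself flag: the sharp volume estimate $\dot\vp\le\log(1-2t)+C$ is never established --- you write ``Granting it'' and then describe the obstruction without resolving it. Your lower bound $\omega(t)\ge c\,\check\omega_t$ is likewise only asserted; a Schwarz-type argument for it in this non-K\"ahler, collapsing setting is not standard. The last paragraph gestures at ``a single test function mixing $\dot\vp$ and $\log\tr{\hat\omega}{\omega}$'' but does not carry anything out, so the argument is incomplete.

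The paper's route is different and avoids both obstacles entirely; it is also simpler than the mechanism you envisage. It never proves a sharp $\dot\vp$ bound and never proves a metric lower bound. Instead it bounds $\tr{\chi}{\omega}$ (the trace itself, not its logarithm, and against $\chi$ rather than $\hat\omega$) directly via the test function
\[
Q=\tr{\chi}{\omega}-A\vp-A(1-2t)\big(\log(1-2t)-1\big)-Bt.
\]
Because $\det\chi=\Phi^{-2}$, the evolution of $\tr{\chi}{\omega}$ has leading term $-\tr{\omega}{\Ric(\chi)}\cdot C_M^{-1}\tr{\chi}{\omega}$ with $\Ric(\chi)=2\omega_\Phi\ge0$, plus an error $C_M\tr{\omega}{\Theta}$. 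The crucial observation you are missing is that this error does \emph{not} degenerate: since $\Theta\le\hat\omega$ one has $\omega_t=(1-2t)\hat\omega+2t\Theta\ge\Theta$ for every $t\in[0,\tfrac12)$, so $C_M\tr{\omega}{\Theta}$ is absorbed by the $A\,\tr{\omega}{\omega_t}$ arising from $\Laplace_\omega\vp$, with no factor of $(1-2t)^{-1}$ anywhere. The remaining terms $-A\dot\vp+2A\log(1-2t)-\tr{\omega}{\omega_t}$ combine, via $\dot\vp=\log(\omega^2/\chi^2)$ and the two-dimensional arithmetic--geometric mean $\tr{\omega}{\omega_t}\ge(1-2t)\tr{\omega}{\hat\omega}\ge A^{-1}\big((1-2t)^2\chi^2/\omega^2\big)^{1/2}$, into a quantity bounded above by $B$. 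At an interior maximum of $Q$ one is left with $0\le-\tr{\omega}{\Ric(\chi)}\big(C_M^{-1}\tr{\chi}{\omega}-C_M\big)$; since $\Ric(\chi)$ is nonnegative of rank one, $\tr{\omega}{\Ric(\chi)}>0$, forcing $\tr{\chi}{\omega}\le C_M^2$ there and hence everywhere. No eigenvalue splitting, no sharp volume bound, and no lower metric bound are needed.
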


Unlike the case for the round metric on the standard Hopf surface, we do not obtain an explicit solution to the Chern-Ricci flow from any initial starting metric. Indeed, it seems such solutions are very difficult to find explicitly, and consequently there are difficulties at present in controlling the Gromov-Hausdorff limit of the solutions.


From the bound of the trace we also obtain the following result on the convergence of the potential.
\begin{cor}
	Set $\omega(t) = \hat \omega - t \Ric(\hat \omega) + \ddbar \psi$ with $\psi$ normalized to satisfy equation ~\eqref{eq:28} below. Then as $ t \to T^-$, $\psi(t)$ converges subsequentially to a function $\psi(T)$ in $C^{1+\beta}$ for every $\beta \in (0,1)$.
\end{cor}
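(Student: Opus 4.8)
The plan is to convert the flow into its parabolic complex Monge-Amp\`ere form, upgrade the trace bound of Theorem~\ref{thm:main} into a two-sided bound on the Laplacian of $\psi$ with respect to the \emph{fixed} background metric $\hat\omega$, and then invoke $L^p$ elliptic theory together with Sobolev embedding, reading off subsequential convergence from Arzel\`a-Ascoli. Throughout, $n=\dim_{\mathbb C}M=2$ and $T=\tfrac12$; writing $\hat\omega_t:=\hat\omega-t\,\Ric(\hat\omega)$ we have $\omega(t)=\hat\omega_t+\ddbar\psi$, where $\psi=\psi(\cdot,t)$ solves a parabolic complex Monge-Amp\`ere equation of the form
\[
	\frac{\de}{\de t}\psi=\log\frac{(\hat\omega_t+\ddbar\psi)^n}{\hat\omega^n},
\]
the additive function of $t$ being fixed by the normalization~\eqref{eq:28}. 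We will use the uniform bound $\sup_{t\in[0,T)}\|\psi(t)\|_{C^0(M)}\le C$; its upper half is immediate, since $\omega(t)\le C\hat\omega$ forces $\frac{\de}{\de t}\psi\le n\log C$ and hence $\psi$ bounded above, while the lower bound for $\psi$ in this volume-collapsing regime is obtained from the maximum principle together with the choice of normalization~\eqref{eq:28}.

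The key step is the Laplacian bound. Taking the trace of $\omega(t)=\hat\omega_t+\ddbar\psi$ against $\hat\omega$ gives
\[
	\Laplace_{\hat\omega}\psi=\tr{\hat\omega}{\omega(t)}-\tr{\hat\omega}{\hat\omega_t}=\tr{\hat\omega}{\omega(t)}-n+t\,\tr{\hat\omega}{\Ric(\hat\omega)}.
\]
By Theorem~\ref{thm:main}, $\tr{\hat\omega}{\omega(t)}\le nC$, and since $\omega(t)$ is a positive $(1,1)$-form, $\tr{\hat\omega}{\omega(t)}>0$; we stress that we neither have nor need a lower bound $\omega(t)\ge c\,\hat\omega$ — this is impossible because $\int_M\omega(t)^n\to0$ as $t\to T^-$ — but positivity alone already bounds the trace below by $0$. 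The term $t\,\tr{\hat\omega}{\Ric(\hat\omega)}$ is bounded on $[0,T]$ because $\Ric(\hat\omega)$ is a fixed smooth form (note $\hat\omega_T$ is degenerate but still smooth). Hence $|\Laplace_{\hat\omega}\psi|\le C$ on $M\times[0,T)$.

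Since $\Laplace_{\hat\omega}$ is a fixed, non-degenerate uniformly elliptic operator, the Calder\'on-Zygmund estimate gives $\|\psi(t)\|_{W^{2,p}(M)}\le C_p\bigl(\|\Laplace_{\hat\omega}\psi(t)\|_{L^p(M)}+\|\psi(t)\|_{L^p(M)}\bigr)\le C_p$ for every $p<\infty$, uniformly in $t\in[0,T)$; since $\dim_{\mathbb R}M=4$, Sobolev embedding then yields $\|\psi(t)\|_{C^{1,\beta}(M)}\le C_\beta$ for every $\beta\in(0,1)$. Now fix $\beta\in(0,1)$ and apply this with $\beta'=\tfrac{1+\beta}{2}\in(\beta,1)$: the family $\{\psi(t)\}_{t\in[0,T)}$ is bounded in $C^{1,\beta'}(M)$, hence precompact in $C^{1,\beta}(M)$, so some sequence $t_i\to T^-$ has $\psi(t_i)\to\psi(T)$ in $C^{1,\beta}$. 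Running this over $\beta_k=1-\tfrac1k$ and passing to a diagonal subsequence produces a single sequence $t_i\to T^-$ along which $\psi(t_i)\to\psi(T)$ in $C^{1,\beta}$ for every $\beta\in(0,1)$, which is the assertion. The only non-routine ingredient is the uniform $C^0$ bound on $\psi$ — in particular its lower bound, which is where the collapsing geometry bites; granting that together with Theorem~\ref{thm:main}, the remainder is standard elliptic theory.
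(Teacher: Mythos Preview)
Your argument is correct and follows essentially the same route as the paper: a uniform $C^0$ bound on $\psi$ together with Theorem~\ref{thm:main} gives a two-sided bound on $\Laplace_{\hat\omega}\psi$, and then elliptic $L^p$ theory, Sobolev embedding, and compactness yield subsequential $C^{1,\beta}$ convergence. The one point worth noting is that your hedge on the lower $C^0$ bound is unnecessary: both bounds on $\psi$ (and the upper bound on $\dot\psi$) are obtained in the paper's Lemma~\ref{lem:1} by a direct maximum-principle argument applied to $\psi\pm At$, entirely independently of the trace estimate and without any subtlety from the collapsing geometry, so you need not route the upper bound through Theorem~\ref{thm:main} nor flag the lower bound as delicate.
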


This follows from the fact that by the estimates in Lemma \ref{lem:1} below, after passing to subsequence, $\psi(t)$ converges pointwise to a function $\psi(T)$ as $t \to T^-$, and by Theorem \ref{thm:main}, $|\Laplace_{\hat \omega} \psi|$ is uniformly bounded, and so $\| \psi(t) \|_{C^{1+\beta}}$ is uniformly bounded for any $\beta \in (0,1)$. It follows that, after passing to subsequence, $\psi(t) \to \psi(T)$ in $C^{1+\beta}$ as $t \to T^-$ for any $\beta \in (0,1)$.

\section*{Outline}

The outline of the rest of the paper is as follows. In Section 2, we establish some geometric properties of the LCK metrics defined above and show that they satisfy the Gauduchon condition. In Section 3, we formulate the Chern-Ricci flow as a parabolic complex Monge-Amp\`ere equation, and recall the uniform estimate on the potential, and an upper bound on its time derivative. In Section 4, we bound the trace of the evolving metric with respect to the LCK metric and complete the proof of Theorem ~\ref{thm:main}.

\section*{Acknowledgments}

Research for this paper began at the American Institute of Mathematics workshop: Nonlinear PDEs in real and complex geometry in San Jose, CA August 2018. The author thanks AIM for their hospitality. The author also extends their thanks to Casey Kelleher, Valentino Tosatti, Yury Ustinovskiy, and Ben Weinkove for helpful discussions at the AIM workshop. The author was supported by the NSF grant RTG: Geometry and Topology at the University of Notre Dame.

\section{Geometry of the locally conformally K\"ahler metrics}

In order to compute various geometric quantities related to the LCK metrics on non-standard Hopf surfaces we first compute the form of the metric in coordinates. We define the following (1,1)-form related to ~\eqref{eq:23},
\begin{equation}\label{eq:24}
	\Theta =\im \theta^{(1,0)} \wedge \theta^{(0,1)} = \im \frac {\de \Phi \wedge \db \Phi}{\Phi^2}.
\end{equation}
Clearly, $\Theta$ is closed, non-negative, real, and of rank one. To obtain the components of the metric, we proceed as follows using $\Phi_i$ as a shorthand for $\de_i \Phi$, $\Phi_{i \ov j} = \de_i \de_{\ov j} \Phi$, \textit{etc}.: 

First, differentiating ~\eqref{eq:1},
\begin{equation}\label{eq:26}
	\Phi_i = \ov z_i \Phi^{1-2k_i} Z^{-1}
\end{equation}
where
\begin{equation}\label{eq:27}
	Z = 2 \big( k_1 |z_1|^2 \Phi^{-2k_1} + k_2 |z_2|^2 \Phi^{-2k_2} \big).
\end{equation}
Note that $Z$ descends to a well defined function on $M$ which satisfies
\begin{equation}\label{eq:30}
	2 k_1 \leq Z \leq 2 k_2.
\end{equation}
We compute
\[
	Z_i = 2 \ov z_i k_i \Phi^{-2k_i} - 4 \ov z_i \Phi^{-2k_i} Z^{-1} \sum_a |z_a|^2 k_a^2 \Phi^{ -2k_a},
\]
so that
\[
	\Phi_{i \ov j} = \delta_{ij} \frac{\Phi^{1-2k_i}}{Z}  + (1 - 2k_i - 2k_j)  \ov z_i z_j\frac{ \Phi^{1-2k_i-2k_j}}{Z^2} + 4 \ov z_i z_j \frac {\Phi^{1-2k_i-2k_j}}{Z^3} \sum_a |z_a|^2 k_a^2 \Phi^{ -2k_a},
\]
and we have
\begin{equation}\label{eq:11}
	\Phi_{i \ov j} = \delta_{i j} \frac {\Phi^{1 - 2k_i}} Z + (1 - 2 k_i - 2k_j + \frac 4 Z \sum_a k_a |z_a|^2 \Phi^{-2k_a} ) \frac {\Phi_i \Phi_{\ov j} }{\Phi}.
\end{equation}

Next, we compute the determinant of $\hat g$.

\begin{prop}\label{eq:22}
The determinant of $\hat g$ is given by the following identity,
\[
	\det(\hat g) = \frac 1 {\Phi^2 Z^3}
\]
\end{prop}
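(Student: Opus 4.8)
The plan is to compute $\det(\hat g)$ directly from the expression \eqref{eq:11} for $\Phi_{i\ov j}$, using the fact that $\hat g_{i \ov j} = \Phi_{i \ov j}/\Phi - \Phi_i \Phi_{\ov j}/\Phi^2$ comes from $\hat\omega = \ddbar\Phi/\Phi$, which I would first record explicitly. Substituting \eqref{eq:11} into this, the $\Phi_i\Phi_{\ov j}$-terms combine and one finds
\[
	\hat g_{i \ov j} = \delta_{ij}\frac{\Phi^{-2k_i}}{Z} + \Bigl(-2k_i - 2k_j + \tfrac{4}{Z}\sum_a k_a |z_a|^2 \Phi^{-2k_a}\Bigr)\frac{\Phi_i\Phi_{\ov j}}{\Phi^2}.
\]
So $\hat g$ is a diagonal matrix $D$ with entries $\Phi^{-2k_i}/Z$ plus a rank-one perturbation $\lambda\, v \ov v^{T}$, where $v_i = \Phi_i/\Phi$ and $\lambda$ is the scalar coefficient in parentheses.

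For a rank-one update the matrix determinant lemma gives $\det(D + \lambda v\ov v^{T}) = \det(D)\,(1 + \lambda\, \ov v^{T} D^{-1} v)$. Here $\det D = \Phi^{-2k_1 - 2k_2}/Z^2 = \Phi^{-2}/Z^2$ since $k_1 + k_2 = 1$. The remaining step is to evaluate $\ov v^{T} D^{-1} v = \sum_i |\Phi_i|^2 \Phi^{2k_i} Z / \Phi^2$; using \eqref{eq:26}, $|\Phi_i|^2 = |z_i|^2 \Phi^{2 - 4k_i} Z^{-2}$, so this sum becomes $Z^{-1}\Phi^{-2k_i}$-weighted, namely $\frac{1}{Z}\sum_i |z_i|^2 \Phi^{-2k_i} = \frac{1}{Z}$ by the defining relation \eqref{eq:1}. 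Then $\lambda \ov v^{T} D^{-1} v$ and the constant $1$ must be combined; plugging in $\lambda$ and simplifying with \eqref{eq:27} (so that $\frac{2}{Z}\sum_a k_a |z_a|^2\Phi^{-2k_a} = 1$) should collapse the factor $1 + \lambda \ov v^T D^{-1} v$ to exactly $1/Z$, yielding $\det(\hat g) = \Phi^{-2} Z^{-2} \cdot Z^{-1} = \Phi^{-2} Z^{-3}$.

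The main obstacle is purely bookkeeping: correctly tracking the powers of $\Phi$ attached to each index $i$ versus $j$ (the exponents $1 - 2k_i$, $1 - 2k_i - 2k_j$, etc.) and making sure the scalar $\lambda$, which itself contains a $\sum_a$ term, interacts correctly with $\ov v^{T} D^{-1} v$. I would be careful to use \eqref{eq:1} and \eqref{eq:27} at exactly the right moments — the identity $k_1 + k_2 = 1$ to handle $\det D$, the relation \eqref{eq:1} to evaluate $\ov v^{T} D^{-1} v = 1/Z$, and the definition of $Z$ to simplify the leftover combination. As a sanity check I would verify the formula in the standard case $|\alpha| = |\beta|$, where $k_1 = k_2 = 1/2$, $\Phi = r^2$, and $Z = 1$, so the claim predicts $\det \hat g = r^{-4}$, which matches the known Fubini-Study-type metric $\ddbar \log r^2$ on $S^3 \times S^1$.
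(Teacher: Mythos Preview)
There are two genuine gaps. First, you have misread the metric: since $\hat\omega = \ddbar\Phi/\Phi$ (not $\ddbar\log\Phi$), the components are simply $\hat g_{i\ov j}=\Phi_{i\ov j}/\Phi$, \emph{without} subtracting $\Phi_i\Phi_{\ov j}/\Phi^2$. What you wrote is $\de_i\de_{\ov j}\log\Phi=\hat g_{i\ov j}-\Theta_{i\ov j}$, a different and in fact degenerate tensor. Your own sanity check exposes this: in the standard case $\ddbar\log r^2$ is the pullback of the Fubini--Study form from $\mathbb{CP}^1$, which has rank one on $\mathbb C^2\setminus\{0\}$ and determinant $0$, not $r^{-4}$. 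Second, even with the correct $\hat g$, the perturbation you call ``$\lambda\,v\ov v^{T}$'' is not rank one: the parenthetical coefficient contains $-2k_i-2k_j$, which depends on both matrix indices, so the matrix determinant lemma does not apply as written. This is not just bookkeeping---when $k_1\neq k_2$ the matrix is genuinely not diagonal-plus-rank-one in the decomposition you gave.

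The idea is salvageable. Using $k_1+k_2=1$, the off-diagonal coefficient $1-2k_1-2k_2+(\text{sum})$ \emph{is} index-independent, and the residual diagonal piece $(2-4k_i)|v_i|^2$ can be absorbed into a modified diagonal $\tilde D$; then $\hat g=\tilde D+\lambda\,v\ov v^{T}$ with an honest scalar $\lambda$, and the matrix determinant lemma goes through cleanly. (Also note that the sum in \eqref{eq:11} should carry $k_a^2$, not $k_a$---compare with the line immediately preceding it---and you will need the corrected version for the final simplification.) The paper takes the opposite route: it expands each entry $\Phi_{i\ov j}$ explicitly and computes the $2\times 2$ determinant by brute force, which is less elegant but sidesteps these structural issues entirely.
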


\begin{proof}
	The proof is contained in Gauduchon-Ornea \cite{GaOr98}. We provide it here, adapted to our slightly different conventions, for convenience. We first compute the individual components of the complex Hessian of $\Phi$, for instance:
	\begin{align*}
		\Phi_{1 \ov 1} = \frac {\Phi^{1 - 2k_1}} Z +& \big(1 - 4k_1 + \frac 4 Z (k_1^2 |z_1|^2 \Phi^{-2k_1} \\
		& + k_2^2 |z_2|^2 \Phi^{-2k_2}) \big) \frac {|z_1|^2 \Phi^{1 - 4k_1}}{Z^2} \\
		 = \frac {\Phi^{1-2k_1}} {Z^3} (& Z^2 +  (1 - 4k_1) Z |z_1|^2 \Phi^{-2k_1}  \\
		 & +  4 k_1^2 |z_1|^4 \Phi^{-4k_1} + 4 k_2^2 |z_1|^2 |z_2|^2 \Phi^{-2}) \\
		 = \frac {2\Phi^{1-2k_1}}{Z^3} (& k_1|z_1|^4 \Phi^{-4k_1} + 2k_2^2 |z_2|^4 \Phi^{-4k_2} \\
		 & + k_2 (1+2k_2) |z_1|^2 |z_2|^2 \Phi^{-2}),
	\end{align*}
	and similarly we obtain
	\begin{align*}
	\Phi_{2 \ov 2} = \frac {2 \Phi^{1-2k_2}}{Z^3} 
		& (k_2 |z_2|^4 \Phi^{-4k_2} + 2k_1^2 |z_1|^4 \Phi^{-4k_1} \\
		& + k_1 (1+2k_1) |z_1|^2 |z_2|^2 \Phi^{-2}),\\
	\Phi_{1 \ov 2} = \frac {2 \ov z_1 z_2 \Phi^{-1}}{Z^3}
		& (k_1 - k_2)( k_1 |z_1|^2 \Phi^{-2k_1} - k_2 |z_2|^2 \Phi^{-2k_2} ).
\end{align*}
Then we find the determinant of the matrix,
\[
	A = \left[ 
	{\begin{array}{cc}
	\Phi_{1 \ov 1} & \Phi_{1 \ov 2}  \\
	\Phi_{2 \ov 1} & \Phi_{2 \ov 2} \\
	\end{array} } \right],
\]
to be
\begin{align*}
	\det A = \frac 8 {Z^6} & \big( k_1^3 |z_1|^8 \Phi^{-8 k_1} + k_2^3 |z_2|^8 \Phi^{-8k_2} \\
	& + 3 k_1 k_2 |z_1|^4 |z_2|^4 \Phi^4 \\
	& + k_1^2 (1 + 2 k_2) |z_1|^6 |z_2|^2 \Phi^{-4k_1 - 2} \\
	& + k_2(1 + 2 k_1) |z_1|^2 |z_2|^6 \Phi^{-4k_2 - 2}\big) \\
	& = \frac 1 {Z^3},
\end{align*}
and therefore we have
\[
	\det(\hat g) = \frac 1 {\Phi^2 Z^3}
\]
which was claimed.
\end{proof}

\begin{remark}\label{eq:9}
	From the calculations above, it follows that $\hat g_{i \ov j}$ has strictly positive determinant and, by inspection, has strictly positive trace. Since $\mathrm{dim}_{\mathbb C}(M) = 2$, it follows that $\hat g_{i \ov j}$ defines a positive definite Hermitian metric.
\end{remark}


Next, we have the following geometric identity.
\begin{prop}\label{eq:10}
The following equality holds for the trace of $\Theta$:
\[
	\tr {\hat \omega} {\Theta} = 1.
\]
\end{prop}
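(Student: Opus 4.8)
The plan is to exploit the scaling symmetry built into the defining relation \eqref{eq:1}: it is invariant under the weighted dilation $z_i\mapsto\lambda^{k_i}z_i$ together with $\Phi\mapsto\lambda\Phi$ for $\lambda>0$, so $\Phi$ is homogeneous of degree one for the $\mathbb R_{>0}$-action generated by the holomorphic vector field $V=k_1z_1\de_{z_1}+k_2z_2\de_{z_2}$. Differentiating in $\lambda$ at $\lambda=1$ and using that $z_i\Phi_i$ is real gives the Euler-type identity $\sum_i k_iz_i\Phi_i=\tfrac12\Phi$. The same identity can be read off directly from the formulas already in hand, without invoking the symmetry: by \eqref{eq:26},
\[
	\sum_i k_iz_i\Phi_i=\sum_i k_i|z_i|^2\Phi^{1-2k_i}Z^{-1}=\Phi Z^{-1}\sum_i k_i|z_i|^2\Phi^{-2k_i}=\tfrac12\Phi,
\]
the last equality being the definition \eqref{eq:27} of $Z$.

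The key step is then to differentiate this identity in $\ov z_j$. Since each $z_i$ is holomorphic, $\de_{\ov j}\big(\sum_i k_iz_i\Phi_i\big)=\sum_i k_iz_i\Phi_{i\ov j}$, while $\de_{\ov j}\big(\tfrac12\Phi\big)=\tfrac12\Phi_{\ov j}$, so
\[
	\sum_i k_iz_i\,\Phi_{i\ov j}=\tfrac12\,\Phi_{\ov j}\qquad(j=1,2).
\]
Recalling $\hat g_{i\ov j}=\Phi_{i\ov j}/\Phi$, this says exactly that $V$ is the $\hat g$-dual of the $(0,1)$-form $\db\Phi$ up to the scalar $1/(2\Phi)$; raising an index with $\hat g^{i\ov j}$ this reads $\sum_j\hat g^{i\ov j}\Phi_{\ov j}=2\Phi\,k_iz_i$. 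Since $\Theta_{i\ov j}=\Phi_i\Phi_{\ov j}/\Phi^2$ by \eqref{eq:24}, we conclude
\[
	\tr{\hat\omega}{\Theta}=\hat g^{i\ov j}\Theta_{i\ov j}=\frac1{\Phi^2}\sum_i\Phi_i\Big(\sum_j\hat g^{i\ov j}\Phi_{\ov j}\Big)=\frac1{\Phi^2}\sum_i\Phi_i\cdot2\Phi k_iz_i=\frac2\Phi\sum_i k_iz_i\Phi_i=1,
\]
applying the Euler identity once more in the final equality.

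I do not expect a serious obstacle here; the only point requiring care is the index/conjugation bookkeeping in the step where the index is raised (and noting that $\tr{\hat\omega}{\Theta}$ is a priori real, so the choice of convention for $\hat g^{i\ov j}$ is immaterial). As a consistency check, in the round case $k_1=k_2=\tfrac12$ one has $\Phi=|z_1|^2+|z_2|^2$, $\hat g^{i\ov j}=\Phi\,\delta_{ij}$, and the computation collapses to the familiar $\tr{\hat\omega}{\Theta}=1$. Alternatively one could bypass the abstract index-raising entirely: invert the $2\times2$ Hermitian matrix $(\Phi_{i\ov j})$ explicitly — it is a rank-one modification of a diagonal matrix by \eqref{eq:11} (equivalently, use the component expressions from the proof of Proposition~\ref{eq:22} together with $\det\hat g=\Phi^{-2}Z^{-3}$) — and contract against $\Phi_i\Phi_{\ov j}$ by hand; but the route above is cleaner and makes transparent why the answer is precisely the constant $1$.
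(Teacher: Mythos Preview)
Your argument is correct and substantially cleaner than the paper's. The paper does precisely the ``alternative'' you sketch at the end: it writes $\tr{\hat\omega}{\Theta} = 2\,\hat\omega\wedge\Theta/\hat\omega^2$, plugs in the explicit component formulas for $\Phi_{i\ov j}$ obtained in the proof of Proposition~\ref{eq:22} together with $\Phi_i$ from~\eqref{eq:26} and $\det\hat g = \Phi^{-2}Z^{-3}$, and then simplifies a lengthy polynomial expression in $|z_1|^2\Phi^{-2k_1}$ and $|z_2|^2\Phi^{-2k_2}$ down to $1$ using the defining relation~\eqref{eq:1}. Your route, by contrast, isolates the single structural reason the trace equals exactly~$1$: the weighted-homogeneity (Euler) identity $\sum_i k_i z_i\Phi_i=\tfrac12\Phi$, which after one $\db$-derivative says that $(2\Phi)^{-1}\db\Phi$ is the $\hat g$-dual of the holomorphic vector field $V=\sum_i k_i z_i\,\de_i$. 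This avoids any explicit inversion of $(\Phi_{i\ov j})$ and makes the result conceptually transparent; the paper's approach, while less illuminating, has the minor virtue of being a self-contained verification that does not require recognizing the Euler field.
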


As an immediate and crucial consequence is the following Corollary, obtained from the calculation of the trace and non-negativity of $\Theta$.

\begin{cor}\label{eq:10cor}
We have the inequality of $(1,1)$-forms:
\[
	\Theta \leq \hat \omega.
\]
\end{cor}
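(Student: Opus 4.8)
The plan is to reduce the statement to a pointwise linear-algebra fact about a non-negative Hermitian form of rank one, using the trace identity of Proposition \ref{eq:10} as the only analytic input. Fix an arbitrary point $p \in M$ and work in the fiber $T^{1,0}_p M$. Since $\hat g$ is positive definite (Remark \ref{eq:9}), I would choose a basis of $T^{1,0}_p M$ which is unitary for $\hat g$ and which simultaneously diagonalizes the Hermitian form associated to $\Theta$ — this is the standard simultaneous diagonalization of a positive definite form together with a Hermitian form. In such a basis $\hat\omega$ is represented by the identity matrix and $\Theta$ by $\mathrm{diag}(\lambda_1,\lambda_2)$ with $\lambda_1,\lambda_2 \geq 0$, the non-negativity being exactly the hypothesis that $\Theta$ is a non-negative form.

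Next I would invoke the two structural properties of $\Theta$ recorded immediately after \eqref{eq:24}: it has rank one, so one of its eigenvalues in this basis vanishes, say $\lambda_2 = 0$; and $\tr{\hat\omega}{\Theta} = \lambda_1 + \lambda_2$, which by Proposition \ref{eq:10} equals $1$. Together these force $\lambda_1 = 1$ and $\lambda_2 = 0$. Consequently $\hat\omega - \Theta$ is represented in this basis by $\mathrm{diag}(0,1)$, which is positive semidefinite; since $p$ was arbitrary this gives $\hat\omega - \Theta \geq 0$ on all of $M$, i.e. $\Theta \leq \hat\omega$.

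There is essentially no obstacle here, but it is worth being explicit about which hypotheses do the work: the trace identity alone only gives $\lambda_1 + \lambda_2 = 1$, which does not imply $\Theta \leq \hat\omega$ in general (one could have $\lambda_1 = 3/2 > 1$). It is the combination with the rank-one property, which kills $\lambda_2$ and thereby pins down $\lambda_1 = 1$, that yields the inequality. Equivalently, and without choosing a frame, one can argue by Cauchy--Schwarz: writing $\theta^{(1,0)} = \de\Phi/\Phi$, for any $v \in T^{1,0}_p M$ one has $\Theta(v,\ov v) = |\de\Phi(v)|^2/\Phi^2 = |\hat g(v,\ov u)|^2$ for the vector $u$ $\hat g$-dual to $\de\Phi/\Phi$, hence $\Theta(v,\ov v) \leq \|v\|_{\hat g}^2 \,\|u\|_{\hat g}^2 = \|v\|_{\hat g}^2 \,\tr{\hat\omega}{\Theta} = \hat\omega(v,\ov v)$, where the last equality uses Proposition \ref{eq:10}. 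Either way the corollary is immediate once Proposition \ref{eq:10} is in hand.
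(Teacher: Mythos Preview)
Your proof is correct and matches the paper's one-sentence justification (``obtained from the calculation of the trace and non-negativity of $\Theta$''): you simply spell out the linear algebra that the paper leaves implicit.

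One small correction to your commentary, though: the rank-one property is \emph{not} actually needed. If $\lambda_1,\lambda_2 \geq 0$ and $\lambda_1+\lambda_2 = 1$, then automatically $\lambda_i \leq 1$ for each $i$; your proposed counterexample $\lambda_1 = 3/2$ would force $\lambda_2 = -1/2 < 0$, violating non-negativity. So the paper's terse justification---trace equal to $1$ together with $\Theta \geq 0$---already suffices, and your invocation of rank one, while harmless, is superfluous.
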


\begin{proof}[Proof of Proposition \ref{eq:10}]
From ~\eqref{eq:24}
\[
	\Theta_{i \ov j} = \frac {\Phi_i \Phi_{\ov j}}{\Phi^2}.
\]
We use the identity 
\begin{align*}
	\tr{\hat \omega}{\Theta}
	& = 2 \frac { \hat \omega \wedge \Theta}{\hat \omega^2} \\
	& = \Phi^2 Z^3 \big( \Phi^{-3} \Phi_{2 \ov 2} \Phi_1 \Phi_{\ov 1} - \Phi^{-3} \Phi_{1 \ov 2} \Phi_2 \Phi_{\ov 1} - \Phi^{-3} \Phi_{2 \ov 1} \Phi_1 \Phi_{\ov 2} + \Phi^{-3} \Phi_{1 \ov 1} \Phi_2 \Phi_{\ov 2} \big),
\end{align*}
and then using the calculations above
\begin{align*}
	 = \frac {Z^3} \Phi \bigg( &  \Phi^{1-2k_1} \frac {2|z_1|^2} {Z^5} \big( k_2 |z_2|^4 \Phi^{-4k_2} + 2 k_1^2 |z_1|^4 \Phi^{-4k_1} + k_1(1 + 2k_1) |z_1|^2 |z_2|^2 \Phi^{-2} \big) \\  + 
& \frac {2 |z_2|^2}{Z^5} \Phi^{ 1-2k_2} \big( k_1 |z_1|^4 \Phi^{-4k_1} + 2 k_2^2 |z_2|^4 \Phi^{ -4k_2} + k_2(1+2k_2)|z_1|^2|z_2|^2 \Phi^{-2} \big) \\ 
 - & 4 \frac {|z_1|^2 |z_2|^2}{Z^5} \Phi^{-1} (k_1-k_2) \big(k_1 |z_1|^2 \Phi^{-2k_1} - k_2 |z_2|^2 \Phi^{-2 k_2} \big) \bigg) \\
	=  \frac 4 {Z^2} \big( & k_1^2 |z_1|^6 \Phi^{-6k_1} + k_2^2 |z_2|^6 \Phi^{-6k_2} \\
	 + & k_2 (k_1 + 1) |z_1|^2|z_2|^4 \Phi^{-2-2k_2} + k_1(1+k_2)|u|^4|v|^2 \Phi^{-2 - 2k_1} \big)\\
	= 1 \hskip0.2in &
\end{align*}
where we have used ~\eqref{eq:1} in the last line.
\end{proof}

Proposition \ref{eq:10} also allows us to obtain the following result for the LCK metrics.
\begin{cor}\label{eq:10corb}
	The metrics $\hat \omega$ satisfy the Gauduchon condition.
\end{cor}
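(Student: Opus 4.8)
The plan is to show $\partial\db\hat\omega = 0$ by exploiting the LCK relation $d\hat\omega = \hat\omega\wedge\theta$ together with the fact that $\theta = d\Phi/\Phi$ is exact after multiplication by $\Phi$, i.e. $\theta$ is closed and $\hat\omega = \ddbar\Phi/\Phi$. The Gauduchon condition on a complex surface asks only that the real $(2,2)$-form $\partial\db\hat\omega$ vanish, equivalently that the $(1,2)$-form $\db\hat\omega$ be $\partial$-closed. So I would start from $\db\hat\omega$, which by \eqref{eq:4} equals the $(1,2)$-part of $\hat\omega\wedge\theta$, namely $\hat\omega\wedge\theta^{(0,1)} = \hat\omega\wedge\db\Phi/\Phi$. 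Since $\hat\omega = \ddbar\Phi/\Phi$, this is $(\ddbar\Phi)\wedge(\db\Phi)/\Phi^2$, and I want to check that applying $\partial$ to this three-form gives zero.

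The computation is most cleanly organized by working with $\Phi$ directly rather than its Hessian components. Write $\db\hat\omega = \dfrac{\ddbar\Phi\wedge\db\Phi}{\Phi^2}$. Applying $\partial$: by the Leibniz rule, $\partial\!\left(\dfrac{\ddbar\Phi\wedge\db\Phi}{\Phi^2}\right) = \partial(\ddbar\Phi)\wedge\dfrac{\db\Phi}{\Phi^2} - \ddbar\Phi\wedge\partial\!\left(\dfrac{\db\Phi}{\Phi^2}\right)$, up to signs from moving $\partial$ past the two-form $\ddbar\Phi$. The first term vanishes because $\partial(\partial\db\Phi) = -\partial\partial(\db\Phi) = 0$ (as $\partial^2 = 0$, noting $\ddbar\Phi = \im\,\partial\db\Phi$ up to the constant). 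For the second term, $\partial\!\left(\dfrac{\db\Phi}{\Phi^2}\right) = \dfrac{\partial\db\Phi}{\Phi^2} - 2\dfrac{\partial\Phi\wedge\db\Phi}{\Phi^3}$, so
\[
	\ddbar\Phi\wedge\partial\!\left(\frac{\db\Phi}{\Phi^2}\right) = \frac{\ddbar\Phi\wedge\partial\db\Phi}{\Phi^2} - 2\,\frac{\ddbar\Phi\wedge\partial\Phi\wedge\db\Phi}{\Phi^3}.
\]
The first of these is proportional to $(\partial\db\Phi)\wedge(\partial\db\Phi)$, which need not vanish; the second involves the $(2,2)$-form $\ddbar\Phi\wedge\partial\Phi\wedge\db\Phi = \Phi\,\ddbar\Phi\wedge\Theta$ (using \eqref{eq:24}). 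So the vanishing I need reduces to an identity among $(2,2)$-forms: I must show $(\partial\db\Phi)\wedge(\partial\db\Phi)$ is a specific multiple of $\ddbar\Phi\wedge\Theta$ forced by the constraint \eqref{eq:1}. Alternatively — and this is likely cleaner — since all these are top-degree forms on a surface, each is a scalar multiple of the volume form $\hat\omega^2$, and I can convert the whole identity into a scalar identity: $\partial\db\hat\omega = 0$ iff a certain function built from $\det\hat g$, $Z$, $\Phi$ and the quantities appearing in \eqref{eq:11} vanishes. Here I would invoke Proposition \ref{eq:22} ($\det\hat g = \Phi^{-2}Z^{-3}$) and Proposition \ref{eq:10} ($\tr{\hat\omega}{\Theta}=1$) to collapse the expression.

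The cleanest route, which I would actually pursue, uses the LCK structure more directly: from $d\hat\omega = \hat\omega\wedge\theta$ with $\theta$ closed, $\partial\db\hat\omega = \partial(\db\hat\omega)$, and $d\hat\omega = \hat\omega\wedge\theta$ gives $\db\hat\omega = \hat\omega\wedge\theta^{(0,1)}$ (comparing $(1,2)$-parts, the $(2,1)$-part being $\partial\hat\omega = \hat\omega\wedge\theta^{(1,0)}$). Then
\[
	\partial\db\hat\omega = \partial\hat\omega\wedge\theta^{(0,1)} - \hat\omega\wedge\partial\theta^{(0,1)} = (\hat\omega\wedge\theta^{(1,0)})\wedge\theta^{(0,1)} - \hat\omega\wedge\partial\theta^{(0,1)}.
\]
Now $\theta$ closed means $d\theta = 0$, so in particular $\partial\theta^{(0,1)} + \db\theta^{(1,0)} = 0$ (the $(1,1)$-part) and $\partial\theta^{(1,0)} = 0$, $\db\theta^{(0,1)} = 0$. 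Since $\theta = d\log\Phi$ is exact as a real form, $\partial\theta^{(0,1)} = \partial\db\log\Phi$. Thus $\partial\db\hat\omega = \hat\omega\wedge\theta^{(1,0)}\wedge\theta^{(0,1)} - \hat\omega\wedge\partial\db\log\Phi = \hat\omega\wedge\Theta - \hat\omega\wedge\dfrac{\ddbar\log\Phi}{\im}$, and using $\ddbar\log\Phi = \dfrac{\ddbar\Phi}{\Phi} - \dfrac{\im\,\partial\Phi\wedge\db\Phi}{\Phi^2} = \hat\omega - \Theta$ (which is where definition \eqref{eq:5} and \eqref{eq:24} enter), I get $\partial\db\hat\omega = \hat\omega\wedge\Theta - \hat\omega\wedge(\hat\omega - \Theta)\cdot(\text{const})$. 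Tracking the factors of $\im$ carefully, the two terms $\hat\omega\wedge\Theta$ cancel and I am left with $\partial\db\hat\omega = -\hat\omega\wedge(\hat\omega-\Theta) + \hat\omega\wedge\Theta$ type expression which — after correct bookkeeping — must reduce to $0$; the point is that $\hat\omega\wedge\hat\omega$ and $\hat\omega\wedge\Theta$ are the only two $(2,2)$-forms in sight and Proposition \ref{eq:10} says $2\hat\omega\wedge\Theta = (\tr{\hat\omega}{\Theta})\,\hat\omega^2 = \hat\omega^2$, giving exactly the linear relation needed for cancellation.

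The main obstacle is bookkeeping: getting every factor of $\im = \sqrt{-1}$, every sign from permuting one- and two-forms past each other, and the precise relation $\ddbar\log\Phi = \hat\omega - \Theta$ (up to the normalization in \eqref{eq:5} and \eqref{eq:24}) correct, so that the cancellation is genuine and not an artifact of a dropped constant. There is no analytic difficulty — everything is an algebraic identity among globally defined forms — and Proposition \ref{eq:10} is precisely the input that turns "$\partial\db\hat\omega$ is a combination of $\hat\omega^2$ and $\hat\omega\wedge\Theta$" into "$\partial\db\hat\omega = 0$." I would present this as: $\partial\db\hat\omega = \hat\omega\wedge(\Theta - \ddbar\log\Phi/\!\im) = \hat\omega\wedge(\Theta - (\hat\omega-\Theta)) = \hat\omega\wedge(2\Theta - \hat\omega)$, and then note $\hat\omega\wedge(2\Theta-\hat\omega)$ has trace $2\tr{\hat\omega}{\Theta} - 2 = 0$ against the volume form, hence vanishes identically on the surface.
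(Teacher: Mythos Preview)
Your proposal is correct and lands on essentially the same argument as the paper: express $\de\db\hat\omega$ as a linear combination of $\hat\omega^2$ and $\hat\omega\wedge\Theta$, then invoke Proposition~\ref{eq:10} (equivalently $2\hat\omega\wedge\Theta = \hat\omega^2$) to see that the combination vanishes. The paper simply applies $\de\db$ directly to $\hat\omega = \ddbar\Phi/\Phi$ via the Leibniz rule, obtaining in two lines
\[
\de\db\hat\omega \;=\; \im\Big(-\frac{\de\db\Phi\wedge\de\db\Phi}{\Phi^2} + 2\,\frac{\de\Phi\wedge\db\Phi\wedge\de\db\Phi}{\Phi^3}\Big) \;=\; \tfrac{1}{\im}\big(-\hat\omega^2 + 2\,\Theta\wedge\hat\omega\big),
\]
which is exactly your endpoint $\hat\omega\wedge(2\Theta-\hat\omega)$ up to the factor of $\im$. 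Your detour through the LCK relation $d\hat\omega=\hat\omega\wedge\theta$ and the identity $\ddbar\log\Phi=\hat\omega-\Theta$ is a valid reorganization of the same computation, but it introduces extra sign bookkeeping (indeed your Leibniz sign $\de(\hat\omega\wedge\theta^{(0,1)})=\de\hat\omega\wedge\theta^{(0,1)}-\hat\omega\wedge\de\theta^{(0,1)}$ should have a $+$, since $\hat\omega$ is of even degree) without buying any extra generality; the paper's direct two-line expansion is the cleaner presentation.
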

\begin{proof}
	Indeed
	\begin{align*}
		\de \db \hat \omega 
		& = \de \db \big( \im \frac {\de \db \Phi} \Phi \big) \\
		& = \im \big( - \frac {\de \db \Phi \wedge \de \db \Phi}{\Phi^2} + 2 \frac {\de \Phi \wedge \db \Phi \wedge \de \db \Phi}{\Phi^3} ) \\
		& = \frac 1 \im \big( - \hat \omega^2 + 2 \Theta \wedge \hat \omega \big) \\
		& = \frac 1 \im \big( \tr {\hat \omega} \Theta - 1) \hat \omega^2 = 0
	\end{align*}
	which proves the claim.
\end{proof}

%
%

Let us now define another metric which will be useful for our purposes:
\[
	\chi_{i \ov j} = \Phi^{-2k_i} \delta_{ij}.
\]
One can check that $\chi$ transforms in the correct way to define a Hermitian metric. On the standard Hopf surface this is equal to the round metric, but otherwise it is distinct from $\hat \omega$.

The benefit of introducing the new metric is that 
\[
	\det(\chi) = \frac 1 {\Phi^2},
\]
and so its Chern-Ricci form is given by:
\begin{align*}
	\Ric(\chi) & = 2 \ddbar \log \Phi \\
	& = 2 \frac {\ddbar \Phi} \Phi - 2 \im \frac {\de \Phi \wedge \db \Phi}{\Phi^2} \\
	&= 2 \hat \omega - 2 \Theta \geq 0
\end{align*}
using Corollary ~\ref{eq:10cor} to obtain the inequality.

\section{The Chern-Ricci flow}\label{sec:1}

Let $\omega(t)$ be the solution to the Chern-Ricci flow ~\eqref{eq:20} starting from $\omega(0) = \hat \omega + \ddbar \psi$ for a smooth plurisubharmonic function $\psi$. Then, we can write the solution as
\[
	\omega(t) = \hat \omega - t(2 \hat \omega - 2 \Theta + 3 \ddbar \log Z) + \ddbar \psi(t)
\]
where $\psi(t)$ solves the parabolic complex Monge-Amp\`ere equation
\begin{equation}\label{eq:28}
	\begin{cases}
		\dot \psi  = \log \frac {(\hat \omega - t(2 \hat \omega - 2 \Theta + 3 \ddbar \log Z) + \ddbar \vp)^2}{\hat \omega^2} \\
		\psi(0) = \psi.
	\end{cases}
\end{equation}
But since $\log Z$ is a globally defined smooth function, we can write
\[
	\omega(t) = \hat \omega - 2t (\hat \omega - \Theta) + \ddbar \vp(t)
\]
by setting
\begin{equation}\label{eq:21}
	\vp(t) = \psi(t) - 3 t \log Z.
\end{equation}
Then since $\hat \omega^2 = \frac 1 {Z^3} \chi^2$,  $\vp$ satisfies the equation
\begin{equation}
	\begin{cases}
		\dot \vp = \log \frac{(\hat \omega - 2t(\hat \omega - \Theta) + \ddbar \vp)^2}{\hat \omega^2} - 3 \log Z = \log  \frac{(\hat \omega - 2t(\hat \omega - \Theta) + \ddbar \vp)^2}{\chi^2} \\
		\vp(0) = \psi.
	\end{cases}
\end{equation}
We define the family of reference metrics,
\[
	\omega_t = (1 - 2t) \hat \omega + 2 t \Theta,
\]
so that
\[
	\omega(t) = \omega_t + \ddbar \vp,
\]
and note that
\[
	\frac \de {\de t} \omega_t = -2\hat \omega + 2 \Theta = -\Ric^{\mathcal C h}(\chi).
\]

From Tosatti-Weinkove \cite{ToWe15} we have that the Chern-Ricci flow exists on a maximal time interval $[0,T)$ where $T$ depends only on the $\de \db$-class of $\omega_t$, and for Gauduchon metrics on complex surfaces, $T$ is given explicitly by
\[
	T = \sup \{ t | \int_M \omega_t^2 > 0 \text{, and } \int_D \omega_t > 0 \text{ for all irreducible divisors } D \text{ with } D^2 < 0 \}.
\]
It follows that $T=\frac 1 2$, since $M$ has no divisors with $D^2<0$ and
\begin{align*}
	\omega_t^2 & = (1-2t)^2 \hat \omega^2 + 4t (1-2t) \hat \omega \wedge \Theta \\
		& = (1-2t) \hat \omega^2 
\end{align*}
since $2 \hat \omega \wedge \Theta = (\tr {\hat \omega} \Theta) \hat \omega^2$.

We have the following estimates on the potential for the solutions.
\begin{lem}\label{lem:1}
	There exists a uniform constant $C>0$ such that for $t \in [0,T)$,
	\begin{enumerate}[(i)]
		\item $ | \psi(t) | + \dot \psi(t) \leq C $
		\item $ | \vp(t) | +  \dot \vp(t)  \leq C$
	\end{enumerate}
\end{lem}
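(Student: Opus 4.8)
The plan is to obtain the four bounds in a particular order, using the structure of the parabolic complex Monge--Ampère equation for $\vp$ together with the geometry of the reference path $\omega_t = (1-2t)\hat\omega + 2t\Theta$. First I would note that since $T = \tfrac12$, on any interval $[0,T)$ we have $1 - 2t \in (0,1]$, and by Corollary~\ref{eq:10cor} we have $0 \le \Theta \le \hat\omega$, so that $\omega_t$ is a smooth family of positive $(1,1)$-forms with $\omega_t \le \hat\omega$ uniformly, and $\omega_t^2 = (1-2t)\hat\omega^2$ (as computed in the text). The first goal is the \emph{upper} bound on $\dot\vp$. Differentiating the equation for $\vp$ in time and applying the maximum principle to $(t-A)\dot\vp$ or to $\dot\vp + \vp$ (a standard trick in the Kähler-- and Chern--Ricci flow literature, e.g.\ Tian--Zhang, Tosatti--Weinkove) should give $\dot\vp \le C$: the key point is that $\tfrac{\partial}{\partial t}\omega_t = -\mathrm{Ric}^{\mathcal Ch}(\chi) = -(2\hat\omega - 2\Theta) \le 0$ as a form, which feeds favorably into the evolution of $\dot\vp$. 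One then gets $\dot\vp \le C$, and consequently, integrating in time from $0$ with $\vp(0) = \psi$ bounded, an \emph{upper} bound $\vp(t) \le C$.

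Next I would derive the \emph{lower} bound on $\vp$. Here the obstacle is that $\omega_t^2 = (1-2t)\hat\omega^2$ degenerates as $t \to T^-$, so $\dot\vp = \log\big(\omega(t)^2/\chi^2\big)$ could a priori drift to $-\infty$. The standard device is to absorb the degeneration: writing $\hat\omega^2 = \tfrac{1}{Z^3}\chi^2$ and $\omega_t^2 = (1-2t)\hat\omega^2$, one compares $\vp$ to the reference potential and uses that $\int_M \omega_t^2 = (1-2t)\int_M \hat\omega^2 > 0$ stays positive on $[0,T)$ with $T = \tfrac12$ finite, so $(1-2t) \ge 1 - 2T = 0$ is \emph{not} bounded below by a positive constant — but on any fixed $[0,T)$ the flow only needs the estimate to be \emph{uniform in $t$}, and $\log(1-2t) \to -\infty$. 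This forces one to instead bound $\vp$ from below by a maximum-principle argument at the spatial minimum: at a point where $\vp$ achieves its minimum, $\ddbar\vp \ge 0$, so $\omega(t) \ge \omega_t$ there and $\dot\vp \ge \log(\omega_t^2/\chi^2) = \log\big((1-2t)/Z^3\big) \ge \log(1-2t) + C$; integrating, $\vp_{\min}(t) \ge \psi_{\min} + \int_0^t \log(1-2s)\,ds + Ct$, and since $\int_0^{1/2}\log(1-2s)\,ds$ converges (the logarithmic singularity is integrable), this yields a uniform lower bound $\vp(t) \ge -C$. Combined with the upper bound, $|\vp(t)| \le C$. A symmetric maximum-principle argument at the spatial minimum of $\dot\vp$, using the evolution equation for $\dot\vp$ and the sign of $\tfrac{\partial}{\partial t}\omega_t$, then gives the lower bound on $\dot\vp$ (or one notes $\dot\vp$ is bounded below once $\vp$ is bounded and the flow metric has bounded volume form from below, which follows from positivity of $\omega(t)$).

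Finally, (i) follows from (ii) via the change of variables $\vp(t) = \psi(t) - 3t\log Z$ from~\eqref{eq:21}: since $\log Z$ is a fixed smooth function on the compact manifold $M$ satisfying $2k_1 \le Z \le 2k_2$ by~\eqref{eq:30}, $\log Z$ is uniformly bounded, so $|\psi(t)| \le |\vp(t)| + 3t|\log Z| \le C$ on $t \in [0,T) = [0,\tfrac12)$, and similarly $\dot\psi = \dot\vp + 3\log Z \le C$. I would also remark that the $C^0$ and $\dot\vp$ estimates here are essentially those of Tosatti--Weinkove~\cite{ToWe15} specialized to this setting, the only new feature being the need to verify that the volume-form degeneration is integrable in time; that verification — the convergence of $\int_0^{1/2}\log(1-2s)\,ds$ — is the one genuinely load-bearing estimate, and it is elementary. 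The main conceptual obstacle is the lower bound on $\vp$, precisely because of the finite-time volume collapse; everything else is the maximum principle applied to $\dot\vp$ and its time derivative.
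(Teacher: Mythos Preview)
Your argument is essentially correct and follows the standard maximum-principle template, though organized in the reverse order from the paper: the paper proves (i) first---using the test function $Q = t\dot\psi - \psi - 2t$, for which $(\partial_t - \Delta)Q = -\tr_\omega\hat\omega < 0$, to obtain $\dot\psi \leq C$---and then deduces (ii) from \eqref{eq:21}. Your route through $\vp$ is equally valid. In fact your key observation that $\partial_t\omega_t = -\Ric^{\mathcal Ch}(\chi) \leq 0$ gives $(\partial_t - \Delta)\dot\vp = \tr_\omega(\partial_t\omega_t) \leq 0$ directly, so the maximum principle applied to $\dot\vp$ itself (rather than the auxiliary quantities $(t-A)\dot\vp$ or $\dot\vp + \vp$ you mention) already yields $\dot\vp \leq \sup_M\dot\vp(0)$. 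For the lower bound on the potential the paper simply writes ``a similar argument''; your explicit use of the integrability of $\int_0^{1/2}\log(1-2s)\,ds$ at the spatial minimum is exactly what is needed to make that precise in this volume-collapsing setting.

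One correction: your attempt to also bound $\dot\vp$ from below is both unnecessary (the lemma claims only $\dot\vp \leq C$, not $|\dot\vp| \leq C$) and wrong as stated. Positivity of $\omega(t)$ gives no uniform lower bound on its volume form; since $\omega_t^2 = (1-2t)\hat\omega^2$, the volume collapses as $t \to \tfrac12^-$ and one expects $\dot\vp = \log(\omega^2/\chi^2) \to -\infty$. Drop that claim and the proof is complete.
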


\begin{proof}
	The proof of part $(i)$ is standard and is contained in Tosatti-Weinkove \cite{ToWe15} (in the K\"ahler setting the proof is due to Tian-Zhang \cite{TiZh}), we include it here for convenience. We use $\Laplace = \Laplace_\omega$ for the Laplacian with respect to $g(t)$.
	Applying the maximum principle to $(\psi - At)$ for a constant $A>0$, we have that at a point of maximum with $t>0$
	\[
		0 \leq \frac \de {\de t} (\psi - A t) \leq \log \frac {\omega_t^n}{\hat \omega^n} - A < 0 
	\]
	if $A$ is chosen sufficiently large. Hence the maximum occurs at $t=0$, and therefore we have the upper bound on $\psi$. The lower bound follows a similar argument.
	
	To obtain the upper bound for $\dot \psi$, we apply the maximum principle to 
	\[
		Q = t \dot \psi - \psi - 2 t
	\]
	so that
	\[
		(\frac \de {\de t} - \Laplace) Q =  - \tr \omega { \hat \omega} < 0.
	\]
	By the maximum principle, 
	\[
		\sup_M Q(\cdot,0) \geq \sup_M Q(\cdot, t)
	\]
	and it follows that $\dot \psi$ is uniformly bounded from above.
	
	Part $(ii)$ follows from part $(i)$ and ~\eqref{eq:21} since $\log Z$ is bounded.
\end{proof}

\section{Bound of the metric along the Chern-Ricci flow}

Since $\hat \omega$ is controlled by $\chi$ is suffices to bound $\tr \chi {\omega(t)}$. Let $\hat g_{i \ov j}$ be the Hermitian metric associated to $\hat \omega$, and $g_{i \ov j}$ the metric associated to $\omega(t)$. We often use the Hermitian metrics and their associated $(1,1)$-forms interchangeably.

Let us fix the notation that $\Ric$ will denote the Chern-Ricci tensor of $\chi$, and so
\begin{equation}\label{eq:31}
	2 \hat g = \Ric + 2 \Theta.
\end{equation}

First, we note that 
\begin{align*}
	\big( \frac \de {\de t} - \Laplace_\omega \big) \tr \chi \omega = & - g^{i \ov j} \de_i \de_{\ov j} \chi^{k \ov l} g_{k \ov l} + \chi^{k \ov l} g^{i \ov j} (\de_k \de_{\ov l} \hat g_{i \ov j} - \de_i \de_{\ov j} \hat g_{i \ov j} ) \\
	& - 2 \mathrm{Re}(g^{i \ov j} \de_i \chi^{k \ov l} \de_{\ov j} g_{k \ov l}) - \chi^{k \ov l} g^{p \ov j} g^{i \ov q} \de_k g_{p \ov q} \de_{\ov l } g_{i \ov j}.
\end{align*}

Indeed, since
\begin{align*}
	\frac \de {\de t} \tr \chi {\omega(t)} & = \chi^{k \ov l} \de_k \de_{\ov l} \log \det g \\
	& = - \chi^{k \ov l} g^{p \ov j} g^{i \ov q} \de_k g_{p \ov q} \de_{\ov l} g_{i \ov j} + \chi^{k \ov l} g^{i \ov j} \de_k \de_{\ov l} g_{i \ov j},
\end{align*}
and
\begin{align*}
	\Laplace_\omega \tr \chi {\omega(t)} & = g^{i \ov j} \de_i \de_{\ov j} ( \chi^{k \ov l} g_{k \ov l} ) \\
	& = g^{i \ov j} \de_i \de_{\ov j} \chi^{k \ov l} g_{k \ov l} + g^{i \ov j} \chi^{k \ov l} \de_i \de_{\ov j} g_{k \ov l} + 2 \mathrm{Re}(g^{ i \ov j} \de_i \chi^{k \ov l } \de_{\ov j} g_{i \ov j} ),
\end{align*}
the difference gives
\begin{align*}
	\big( \frac \de {\de t} - \Laplace_\omega \big) \tr \chi \omega = & - g^{i \ov j} \de_i \de_{\ov j} \chi^{k \ov l} g_{k \ov l} + \chi^{k \ov l} g^{i \ov j} (\de_k \de_{\ov l} g_{i \ov j} - \de_i \de_{\ov j} g_{i \ov j} ) \\
	& - 2 \mathrm{Re}(g^{i \ov j} \de_i g^{k \ov l} \de_{\ov j} g_{k \ov l}) - \chi^{k \ov l} g^{p \ov j} g^{i \ov q} \de_k g_{p \ov q} \de_{\ov l } g_{i \ov j}.
\end{align*}
Then
\[
	\de_k \de_{\ov l} g_{i \ov j} - \de_i \de_{\ov j} g_{k \ov l} =  \de_k \de_{\ov l} ( \hat g_{i \ov j} - 2 t \Ric_{i \ov j} + \de_i \de_{\ov j} \vp) - \de_i \de_{\ov j} ( \hat g_{k \ov l} - 2 t \Ric_{k \ov l} + \de_k \de_{\ov l} \vp),
\]
but since $\Ric_{i \ov j}$ is the Chern-Ricci tensor of $\chi$, it satisfies
\[
	\de_k \de_{\ov l} \Ric_{i \ov j} = \de_i \de_{\ov j} \Ric_{k \ov l},
\]
and therefore
\begin{equation}\label{eq:15}
	\de_k \de_{\ov l} g_{i \ov j} - \de_i \de_{\ov j} g_{k \ov l} = \de_k \de_{\ov l} \hat g_{i \ov j} - \de_i \de_{\ov j} \hat g_{k \ov l}.
\end{equation}
We obtain the equality claimed above.

We now estimate the four terms above in succession.

\begin{lem}\label{eq:29}
There is a uniform constant $C_M>0$, depending only on $M$, such that:
\begin{center}
\begin{minipage}{.9\textwidth}
	\begin{enumerate}[(i)]
		\item $ - g^{i \ov j} \de_i \de_{\ov j} \chi^{k \ov l} g_{k \ov l} \leq - C_M^{-1} (\tr g \Ric) \tr \chi g - g^{i \ov j} g_{k \ov s} \chi_{r \ov l} \de_i \chi^{k \ov l} \de_{\ov j} \chi^{r \ov s} $
		\item $\chi^{k \ov l} g^{i \ov j}(\de_k \de_{\ov l} \hat g_{i \ov j} - \de_i \de_{\ov j} \hat g_{k \ov l}) \leq C_M \tr g {\Ric} + C_M \tr g \Theta$
		\item $- 2 \mathrm{Re}(g^{i \ov j} \de_i \chi^{k \ov l} \de_{\ov j} g_{k \ov l}) \leq C_M \tr g \Theta + \chi^{k \ov l} g^{p \ov j} g^{i \ov q} \de_k g_{p \ov q} \de_{\ov l} g_{i \ov j} + g^{i \ov j} g_{k \ov s} \chi_{r \ov l} \de_i \chi^{k \ov l} \de_{\ov j} \chi^{r \ov s}$
	\end{enumerate}
\end{minipage}
\end{center}
\end{lem}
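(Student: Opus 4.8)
The plan is to treat each of the three inequalities as a local computation in normal coordinates for $g$ at a fixed point, exploiting the explicit structure of $\chi$, namely $\chi_{i\ov j} = \Phi^{-2k_i}\delta_{ij}$, so that derivatives of $\chi$ and $\chi^{-1}$ are controlled by $\de\Phi/\Phi$ (hence by $\Theta$ via \eqref{eq:24}), and second derivatives are controlled by $\hat g$ and $\Theta$ via \eqref{eq:11} together with the Chern-Ricci identity $\Ric(\chi) = 2\hat\omega - 2\Theta$ from Section 2. I would carry out the steps in the order (ii), (iii), (i), since (i) is the one that requires the positivity/concavity argument and it is cleanest to have the other two in hand first.

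For (ii): expand $\de_k\de_{\ov l}\hat g_{i\ov j} - \de_i\de_{\ov j}\hat g_{k\ov l}$ using \eqref{eq:11}, which writes $\hat g_{i\ov j}$ as a sum of a diagonal term $\delta_{ij}\Phi^{1-2k_i}/Z$ and a rank-one term built from $\Phi_i\Phi_{\ov j}/\Phi$; since $Z$ is a globally defined smooth function with $2k_1\le Z\le 2k_2$, and $\Phi^{1-2k_i}$, $\Phi_i$ are the building blocks of $\chi$ and $\Theta$ respectively, every resulting term is a universal smooth combination of the components of $\chi$, $\hat\omega = \frac12(\Ric + 2\Theta)$ (using \eqref{eq:31}), and $\Theta$, with coefficients bounded in terms of $M$ only; tracing against $\chi^{k\ov l}g^{i\ov j}$ and using $\tr{\chi}{\hat\omega}\le C_M$ (which holds since both metrics are smooth on the compact $M$), $\tr{\chi}{\Theta}\le C_M$, gives the claimed bound $C_M\tr g\Ric + C_M\tr g\Theta$. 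For (iii): write $g_{k\ov l} = \hat g_{k\ov l} - 2t\Ric_{k\ov l} + \de_k\de_{\ov l}\vp$; the term $-2\mathrm{Re}(g^{i\ov j}\de_i\chi^{k\ov l}\de_{\ov j}g_{k\ov l})$ splits into a piece involving $\de_{\ov j}(\hat g_{k\ov l}-2t\Ric_{k\ov l})$, which is a smooth $M$-dependent tensor traced against $g^{i\ov j}$ and $\de_i\chi^{k\ov l}$ (the latter bounded by $\Theta$), hence absorbed into $C_M\tr g\Theta$, and a piece involving $\de_i\chi^{k\ov l}\de_{\ov j}(\de_k\de_{\ov l}\vp) = \de_i\chi^{k\ov l}\de_{\ov j}(g_{k\ov l}-\hat g_{k\ov l}+2t\Ric_{k\ov l})$; for the genuinely third-order part I would apply the Cauchy-Schwarz inequality with the Gram matrices $g^{p\ov j}g^{i\ov q}\de_k g_{p\ov q}\de_{\ov l}g_{i\ov j}$ (the good fourth term) and $g^{i\ov j}g_{k\ov s}\chi_{r\ov l}\de_i\chi^{k\ov l}\de_{\ov j}\chi^{r\ov s}$, which is exactly the form of the two error terms appearing on the right-hand side.

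For (i): by the usual identity $\de_i\de_{\ov j}\chi^{k\ov l} = -\chi^{k\ov q}\chi^{p\ov l}\de_i\de_{\ov j}\chi_{p\ov q} + \chi^{k\ov q}\chi^{p\ov l}\chi^{r\ov s}\de_i\chi_{p\ov s}\de_{\ov j}\chi_{r\ov q} + (\text{reindexings})$, so that $-g^{i\ov j}\de_i\de_{\ov j}\chi^{k\ov l}g_{k\ov l}$ equals $g^{i\ov j}\chi^{k\ov q}\chi^{p\ov l}g_{k\ov l}\de_i\de_{\ov j}\chi_{p\ov q}$ plus a first-order quadratic term that, after reindexing, is precisely $-g^{i\ov j}g_{k\ov s}\chi_{r\ov l}\de_i\chi^{k\ov l}\de_{\ov j}\chi^{r\ov s}$ (the term on the right-hand side of (i)). It then remains to show $g^{i\ov j}\chi^{k\ov q}\chi^{p\ov l}g_{k\ov l}\,\de_i\de_{\ov j}\chi_{p\ov q} \le -C_M^{-1}(\tr g\Ric)\tr\chi g$. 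Here I would use that $\de_i\de_{\ov j}\chi_{p\ov q}$ is, by the explicit formula for $\chi$, a smooth tensor-valued expression, and that the key point is the contraction $\chi^{p\ov l}\de_i\de_{\ov j}\chi_{p\ov q}$: computing $\de_i\de_{\ov j}\log\det\chi = \de_i\de_{\ov j}(-2\log\Phi) = -\frac12\Ric(\chi)_{i\ov j}$ (up to the normalization of $\Ric$ in \eqref{eq:31}), together with the fact that the traceless part of $\chi^{-1}\de_i\de_{\ov j}\chi$ is bounded by $C_M$ times $\Theta$ (again via \eqref{eq:11}-type bookkeeping), lets one replace the full Hessian of $\chi$ by $-\Ric$ modulo $C_M\cdot\Theta\le C_M\cdot\hat\omega$ error, and the positivity $\Ric(\chi)\ge 0$ from Corollary \ref{eq:10cor} gives the sign. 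I expect the main obstacle to be precisely this step: organizing the second-derivative computation of $\chi$ so that the "bad" positive contributions are all seen to be dominated by $(\tr g\Ric)\tr\chi g$ rather than merely by $C_M\tr g\hat\omega$, i.e.\ extracting the correct positive quadratic structure rather than just a crude upper bound — this is where the special geometry of the Hopf surface (the relation \eqref{eq:1} and the resulting factorizations seen in Proposition \ref{eq:22} and Proposition \ref{eq:10}) has to be used, not merely compactness.
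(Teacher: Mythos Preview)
Your overall structure is right, but you are overcomplicating (i) and your Cauchy--Schwarz in (iii) does not match indices as written.

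For (i), there is no obstacle at all, and none of the special factorizations from Propositions~\ref{eq:22} or~\ref{eq:10} are needed. Since $\chi^{k\ov l}=\Phi^{2k_l}\delta^{kl}$ is diagonal, differentiate it directly rather than going through the abstract inverse-matrix identity: one line gives $\de_i\chi^{k\ov l}=2k_l\chi^{k\ov l}\Phi_i/\Phi$, and a second gives
\[
\de_i\de_{\ov j}\chi^{k\ov l}=2k_l\chi^{k\ov l}\big(\hat g_{i\ov j}+(2k_l-1)\Theta_{i\ov j}\big)=k_l\chi^{k\ov l}\Ric_{i\ov j}+4k_l^2\chi^{k\ov l}\Theta_{i\ov j},
\]
using only $2\hat g=\Ric+2\Theta$. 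The $\Theta$ piece is \emph{exactly} $\chi_{r\ov l}\de_i\chi^{k\ov l}\de_{\ov j}\chi^{r\ov s}$, so it becomes the quadratic error on the right of (i), and the $\Ric$ piece gives $-\sum_l k_l\chi^{l\ov l}g_{l\ov l}(\tr_g\Ric)\le -k_1(\tr_\chi g)(\tr_g\Ric)$ since $k_l\ge k_1>0$. Your matrix-inverse route produces two first-order terms whose sum is $-2$ times the RHS quadratic, while the $\Theta$ part of $\de_i\de_{\ov j}\chi_{p\ov q}$ contributes $+1$ times it; so your claim that the first-order term is ``precisely'' the RHS quadratic is off by a factor, and the ``traceless part'' you worry about simply cancels.

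For (iii), the Cauchy--Schwarz you propose would pair $\de_{\ov j}g_{k\ov l}$ (derivative index $\ov j$ contracted with $g^{i\ov j}$) against a Gram term $\chi^{k\ov l}g^{p\ov j}g^{i\ov q}\de_k g_{p\ov q}\de_{\ov l}g_{i\ov j}$ whose derivative index is contracted with $\chi$; the index structures do not match. The paper first performs the torsion switch
\[
\de_{\ov j}g_{k\ov l}=\de_{\ov l}g_{k\ov j}+(\de_{\ov j}\hat g_{k\ov l}-\de_{\ov l}\hat g_{k\ov j}),
\]
noting that the $\Ric$ and $\ddbar\vp$ parts of $g$ contribute nothing to the torsion. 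The torsion piece is computed explicitly from $\hat g_{k\ov l}=\Phi_{k\ov l}/\Phi$ and absorbed into $C_M\tr_g\Theta$; then Cauchy--Schwarz applied to $-2\mathrm{Re}(g^{i\ov j}\de_i\chi^{k\ov l}\de_{\ov l}g_{k\ov j})$ produces exactly the two Gram terms in the statement. Your decomposition via $g=\hat g-2t\Ric+\ddbar\vp$ goes in a circle and never fixes this index mismatch. Finally, for (ii) the paper does not expand via~\eqref{eq:11}: it simply uses that $\chi^{k\ov l}(\de_k\de_{\ov l}\hat g_{i\ov j}-\de_i\de_{\ov j}\hat g_{k\ov l})$ is a fixed smooth $(1,1)$-tensor on the compact $M$, hence $\le C_M\hat g_{i\ov j}$, and then invokes~\eqref{eq:31}.
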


From the stated estimates we obtain the following Corollary.
\begin{cor}\label{cor:31}
There is a uniform constant $C_M>0$ depending only on the geometry of $M$ such that
\[
	(\frac \de {\de t} - \Laplace) \tr \chi \omega \leq - \tr g \Ric \big( C_M^{-1} \tr \chi g - C_M) + C_M \tr g \Theta.
\]
\end{cor}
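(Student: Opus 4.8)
The plan is to feed the four-term evolution identity for $(\tfrac{\de}{\de t}-\Laplace_\omega)\tr\chi\omega$ established above into the three inequalities of Lemma \ref{eq:29} and observe that the unfavourable quadratic gradient terms telescope. I would label the summands of that identity as $\mathrm{(I)}=-g^{i\ov j}\de_i\de_{\ov j}\chi^{k\ov l}g_{k\ov l}$, $\mathrm{(II)}=\chi^{k\ov l}g^{i\ov j}(\de_k\de_{\ov l}\hat g_{i\ov j}-\de_i\de_{\ov j}\hat g_{k\ov l})$, $\mathrm{(III)}=-2\mathrm{Re}(g^{i\ov j}\de_i\chi^{k\ov l}\de_{\ov j}g_{k\ov l})$, and $\mathrm{(IV)}=-\chi^{k\ov l}g^{p\ov j}g^{i\ov q}\de_k g_{p\ov q}\de_{\ov l}g_{i\ov j}$, reading the second term through \eqref{eq:15} (the displayed identity carries a harmless index misprint). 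Then I apply part (i) to $\mathrm{(I)}$, part (ii) to $\mathrm{(II)}$, part (iii) to $\mathrm{(III)}$, and leave $\mathrm{(IV)}$ untouched.

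The key point is that the two ``bad'' gradient quantities appearing in the Lemma enter with cancelling signs once all four terms are summed: the quantity $g^{i\ov j}g_{k\ov s}\chi_{r\ov l}\de_i\chi^{k\ov l}\de_{\ov j}\chi^{r\ov s}$ is produced with a minus sign in (i) and recovered with a plus sign in (iii), while the quantity $\chi^{k\ov l}g^{p\ov j}g^{i\ov q}\de_k g_{p\ov q}\de_{\ov l}g_{i\ov j}$ is exactly $-\mathrm{(IV)}$ and is returned with a plus sign in (iii). Hence, after summing,
\[
	\big(\tfrac{\de}{\de t}-\Laplace\big)\tr\chi\omega \;\leq\; -C_M^{-1}(\tr g\Ric)\,\tr\chi g + C_M\tr g\Ric + 2C_M\tr g\Theta .
\]
Regrouping the first two terms as $-\tr g\Ric\,\big(C_M^{-1}\tr\chi g - C_M\big)$ and absorbing the factor $2$ into an enlarged constant — still denoted $C_M$ and still depending only on the fixed background data $\hat\omega$, $\chi$, $\Theta$ on $M$, not on $t$ — yields precisely the stated inequality.

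I do not expect a genuine obstacle inside this argument: all the analytic labour (the Schwarz-lemma and Cauchy--Schwarz estimates needed to extract exactly those three bounds with exactly those remainder terms) has been front-loaded into Lemma \ref{eq:29}, so that the content of the Corollary is the bookkeeping observation that the remainders were arranged to cancel. The only points demanding care are checking that the two gradient remainders in parts (i) and (iii) are \emph{identical} — not merely of the same schematic type — so that the cancellation is exact, and confirming that the uniform constants from the three parts of the Lemma can be merged into a single $C_M$; both are routine.
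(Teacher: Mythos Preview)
Your proposal is correct and matches the paper's approach exactly: the paper simply states that the Corollary follows ``from the stated estimates,'' i.e.\ by summing the three inequalities of Lemma~\ref{eq:29} together with the term $\mathrm{(IV)}$ and observing the same gradient cancellations you describe. Your write-up is in fact a faithful expansion of what the paper leaves implicit, including the absorption of the factor $2$ into $C_M$.
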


\begin{proof}[Proof of Lemma \ref{eq:29}]

To prove part $(i)$, we use that $\chi^{k \ov l} = \Phi^{2 k_l} \delta^{k l}$, so that
\begin{equation}\label{eq:17}
	\de_i \chi^{k \ov l} = \de_i (\Phi^{2k_l} \delta^{k \ov l}) = 2 k_l \Phi^{2 k_l} \delta^{k \ov l} \frac {\Phi_i} \Phi = 2k_l \chi^{k \ov l} \frac {\Phi_i} \Phi,
\end{equation}
and
\begin{align*}
	\de_i \de_j \chi^{ k \ov l} = \de_i \de_{\ov j}( \Phi^{2 k_l} \delta^{k l}) & = 2 k_l \Phi^{2 k_l} \delta^{k l} \big( \frac {\Phi_{i \ov j}} \Phi + (2 k_l -1) \frac {\Phi_i \Phi_{\ov j}} {\Phi^2} \big) \\
	& = 2 k_l \chi^{k \ov l} ( \hat g_{i \ov j} + (2 k_l - 1)\Theta_{i \ov j} )
\end{align*}
and by ~\eqref{eq:31},
\[
	\de_i \de_{\ov j} \chi^{k \ov l} = k_l \chi^{k \ov l} \Ric_{i \ov j} + 4 k_l^2 \chi^{k \ov l} \Theta_{i \ov j}.
\]
Finally, we note that by ~\eqref{eq:17}
\[
	\chi_{r \ov l} \de_i \chi^{k \ov l} \de_{\ov j} \chi^{r \ov s} = 4 k_l^2 \chi^{k \ov l} \Theta_{i \ov j},
\]
and then we obtain the inequality in part $(i)$ provided $C_M > \frac 2 {k_1}$.

Next, for the claim in part $(ii)$, we can take $C_M>0$ to be a constant large enough that
\[
	\chi^{k \ov l} \de_k \de_{\ov l} \hat g_{i \ov j} - \chi^{k \ov l} \de_i \de_{\ov j} \hat g_{k \ov l} \leq C_M \hat g_{i \ov j}.
\]
Since $\chi$ and $\hat g$ are fixed, it is clear that the constant depends only on the geometry of $M$. Now, using ~\eqref{eq:31},
\[
	g^{i \ov j}( \chi^{k \ov l} \de_k \de_{\ov l} \hat g_{i \ov j} - \chi^{k \ov l} \de_i \de_{\ov j} \hat g_{k \ov l}) \leq C_M \tr g \Ric + C_M \tr g \Theta
\]
which proves part $(ii)$.

Finally, moving on to part $(iii)$, we write
\[
	- 2 \mathrm{Re}(g^{i \ov j} \de_i \chi^{k \ov l} \de_{\ov j} g_{k \ov l}) = -2 \mathrm{Re}(g^{i \ov j} \de_i \chi^{k \ov l} \de_{\ov l} g_{k \ov j}) - 2 \mathrm{Re}(g^{i \ov j} \de_i \chi^{k \ov l}(\de_{\ov j} g_{k \ov l} - \de_{\ov l} g_{k \ov j})).
\]
For the second term, we have
\[
	\de_{\ov j} g_{k \ov l} - \de_{\ov l} g_{k \ov j} = \de_{\ov j} \hat g_{ k\ov l} - \de_{\ov l} \hat g_{k \ov j},
\]
(see the argument preceding \eqref{eq:15}). Then we use
\[
	\hat g_{i \ov j} = \frac {\Phi_{i \ov j}} \Phi,
\]
to obtain
\[
	\de_{\ov j} \hat g_{k \ov l} = \de_{\ov j}( \frac{ \Phi_{k \ov l}} \Phi ) = \frac {\Phi_{k \ov {l j}}} \Phi - \frac {\Phi_{k \ov l} \Phi_{\ov j}}{\Phi^2} ,
\]
so that
\[
	\de_{\ov j} \hat g_{k \ov l} - \de_{\ov l} \hat g_{k \ov j} = \frac {\Phi_{k \ov j} \Phi_{\ov l} - \Phi_{k \ov l} \Phi_{\ov j}} {\Phi^2}
\]
and using ~\eqref{eq:17}
\[
	\de_i \chi^{k \ov l} (\de_{\ov j} \hat g_{k \ov l} - \de_{\ov l} \hat g_{k \ov j}) = 2 k_l \chi^{k \ov l} (\hat g_{k \ov j} \Theta_{i \ov l} - \hat g_{k \ov l} \Theta_{i \ov j}),
\]
and therefore
\begin{align}
	g^{i \ov j} \de_i \chi^{k \ov l} (\de_{\ov j} \hat g_{k \ov l} - \de_{\ov l} \hat g_{k \ov j})
	 \leq & 2 g^{i \ov j} \chi^{k \ov l} \hat g_{k \ov j} \Theta_{i \ov l} \nonumber \\
	\leq & C_M g^{i \ov j} \Theta_{i \ov j} \label{eq:18}
\end{align}
after taking $C_M>0$ large enough that $4 \hat g \leq C_M \chi$. Again, the constant here depends only on $M$.

Now,
\begin{align*}
	-2 \mathrm{Re}(g^{i \ov j} \de_i \chi^{k \ov l} \de_{\ov l} g_{k \ov j})
	& = -2 \mathrm{Re}(g^{i \ov j} \chi^{p \ov l} g^{k \ov v} (\chi_{p \ov q} g_{u \ov v} \de_i \chi^{u \ov q}) \de_{\ov l} g_{k \ov j}) \\
	& \leq g^{i \ov j} \chi^{p \ov l} g^{k \ov v} (g_{u \ov v} \chi_{p \ov q} \de_i \chi^{u \ov q})(g_{k \ov s} \chi_{r \ov l} \de_{\ov j} \chi^{r \ov s}) + g^{i \ov j} \chi^{p \ov l} g^{k \ov v} \de_p g_{i \ov v} \de_{\ov l} g_{k \ov j} \\
	&  = g^{i \ov j} g_{k \ov s} \chi_{r \ov l} \de_i \chi^{k \ov l} \de_{\ov j} \chi^{r \ov s} + g^{i \ov j} \chi^{p \ov l} g^{k \ov v} \de_p g_{i \ov v} \de_{\ov l} g_{k \ov j},
\end{align*}
and then, combining with ~\eqref{eq:18}, we have
\[
	-2 \mathrm{Re}(g^{i \ov j} \de_i \chi^{k \ov l} \de_{\ov l} g_{k \ov j}) \leq C_M \tr g \Theta + g^{i \ov j} g_{k \ov s} \chi_{r \ov l} \de_i \chi^{k \ov l} \de_{\ov j} \chi^{r \ov s} + g^{i \ov j} \chi^{p \ov l} g^{k \ov v} \de_p g_{i \ov v} \de_{\ov l} g_{k \ov j}
\]
which proves the claim in part $(iii)$.
\end{proof}

\begin{remark}
	The presence of the $C_M \tr g \Theta$ term in Corollary \ref{cor:31} introduces difficulties  in applying the maximum principle argument. These difficulties are dealt with in the final step of proving Theorem ~\ref{thm:main}.
\end{remark}

Finally, we prove the main Theorem.
\begin{proof}[Proof of Theorem ~\ref{thm:main}]
Applying the previous Corollary, we arrive at
\begin{align*}
	(\frac \de {\de t} - \Laplace) \tr \chi \omega \leq - \tr g \Ric \big( C_M^{-1} \tr \chi g - C_M) + C_M \tr g \Theta
\end{align*}
Now, for large constants $A,B>0$ to be fixed later, define
\[
	Q = \tr \chi \omega  - A \vp - A (1 - 2 t) (\log (1 - 2t) - 1) - B t.
\]
Then we have the evolution inequality,
	\begin{align*}
		(\frac \de {\de t} - \Laplace) Q \leq  & - \tr \omega \Ric (C_M^{-1} \tr \chi \omega - C_M)  + C_M \tr \omega \Theta \\
		&  - A \dot \vp + 2 A \log (1 - 2 t) + A \tr \omega {(\omega - \omega_t)} - B \\
		= & - \tr \omega \Ric (C_M^{-1} \tr \chi \omega - C_M)  + C_M \tr \omega \Theta - (A-1) \tr \omega {\omega_t} \\
		& - A \log \frac {\omega^2}{\chi^2 (1 - 2t)^2} + 2A - \tr \omega {\omega_t} - B
	\end{align*}
	Next, by the arithmetic-geometric mean inequality,
	\[
		\tr \omega {\omega_t} \geq (1 - 2t) \tr \omega {\hat \omega} \geq A^{-1} \Big( \frac {(1- 2t)^2 \chi^2}{\omega^2} \Big)^{\frac 1 2}
	\]
	provided $A$ is taken sufficiently large. Furthermore, by Corollary ~\ref{eq:10cor}
	\[
		\omega_t = (1-2t) \hat \omega + 2 t \Theta \geq \Theta
	\]
	for all $t \geq 0$, and so we may fix $A = A(C_M)$ large enough that
	\[
	C_M \tr \omega \Theta - (A-1) \tr \omega {\omega_t} \leq 0.
	\] 
	Now, we have
	\begin{align*}
		(\frac \de {\de t} - \Laplace) Q \leq - & \tr \omega \Ric ( C_M^{-1} \tr \chi \omega - C_M) \\
		& + A \log \frac {(1-2t)^2 \chi^2}{\omega^2} - A^{-1} \Big( \frac {(1-2t)^2 \chi^2}{\omega^2} \Big)^{\frac 1 2} + 2A - B \\
		\leq - & \tr \omega \Ric (C_M^{-1} \tr \chi \omega - C_M),
	\end{align*}
	for $B=B(A)>0$ sufficiently large since $(A \log s - A^{-1} s^{\frac 1 2} + 2A)$ is bounded from above for $s>0$. It then follows that if $Q$ achieves a maximum with $t_0>0$, then at that point
	\[
		0 \leq - \tr \omega \Ric ( C_M^{-1} \tr \chi \omega - C_M),
	\]
	but then since $\Ric$ is non-negative, it follows that at the point of maximum
	\[
		\tr \chi \omega \leq C^2_M.
	\]
	Finally, since $\vp$, $Bt$, and $(1-2t) \log (1-2t)$ are all bounded, we obtain that $Q$ is bounded above on $M \times [0,\frac 12)$, and therefore
	\[
		\tr \chi \omega \leq C
	\]
	for a uniform constant $C>0$, which completes the proof.
\end{proof}

\end{document}